\documentclass[12pt,a4paper,reqno]{amsart}

\usepackage[utf8]{inputenc}

\usepackage{amssymb}
\usepackage{amsfonts}
\usepackage{amsmath}
\usepackage[mathscr]{eucal}
\usepackage{comment}
\usepackage{color}

\newtheorem{thm}{Theorem}[section]
\newtheorem{prop}[thm]{Proposition}
\newtheorem{lem}[thm]{Lemma}

\newtheorem{rem}[thm]{Remark}

\numberwithin{equation}{section}

\allowdisplaybreaks

\def\GL{{\operatorname {GL}}}

\def\SO{{\operatorname{SO}}}
\def\U{{\operatorname {U}}}

\def\Re{{\operatorname {Re}}}

\def\leq{\leqslant}
\def\geq{\geqslant}

\def\le{\leq}
\def\ge{\geq}

\def\1{{\bold 1}}

\renewcommand{\a}{\alpha}
\renewcommand{\b}{\beta}

\newcommand{\e}{\epsilon}

\newcommand{\s}{\sigma}

\newcommand{\Acal}{{\mathcal A}}

\newcommand{\Fcal}{{\mathcal F}}

\newcommand{\Ocal}{{\mathcal O}}

\newcommand{\CC}{\mathbb{C}}

\newcommand{\RR}{\mathbb{R}}

\newcommand{\ZZ}{\mathbb{Z}}

\newcommand{\Sp}{\operatorname{Sp}}

\renewcommand{\Re}{\operatorname{Re}}

\newcommand{\supp}{\operatorname{supp}}

\definecolor{myred}{RGB}{137,37,37}

\title[One-level density weighted by special values of $\zeta$]{One-level density of zeros of the Riemann zeta function weighted by its special values}

\author[Sugiyama]{Shingo Sugiyama}
\address{Faculty of Mathematics and Physics,
Institute of Science and Engineering,
Kanazawa University,
Kakumamachi, Kanazawa, Ishikawa 920-1192, Japan
}
\email{s-sugiyama@se.kanazawa-u.ac.jp}

\author[Suriajaya]{Ade Irma Suriajaya}
\address{Faculty of Mathematics, Kyushu University, 744 Motooka, Nishi-ku, Fukuoka 819-0395, Japan}
\email{adeirmasuriajaya@math.kyushu-u.ac.jp}

\keywords{Riemann zeta function, low-lying zeros, weighted one-level density, twisted moment}

\subjclass[2020]{Primary 11M50; Secondary 11M06, 11M26}

\date{\today}

\begin{document}

\begin{abstract}
We consider the one-level density of zeros of the Riemann zeta function weighted by its special values on the line $\Re(s)=\sigma$, where $\sigma$ is a fixed real number such that $1/2\le \sigma\le1$.
We show that the density function for the Riemann zeta function changes if and only if $\sigma=1/2$.
\end{abstract}

\maketitle

\section{Introduction}

\subsection{Weighted density conjecture for a family of $L$-functions}

The relation between number theory and physics is an interesting subject and has been studied in several directions.
For example, the behavior of $L$-functions in number theory can be studied using random matrix theory in physics.
Katz and Sarnak \cite{KatzSarnak1,KatzSarnak2}
predicted the density conjecture that
low-lying zeros of $L$-functions in a family should distribute like eigenvalues of random matrices in a compact matrix group.
For a family $\Fcal$ of $L$-functions,
they labeled $\Fcal$ by the symmetry type $G \in \{\U, \Sp, {\rm O}, \SO({\rm even}), \SO({\rm odd})\}$ derived from compact matrix groups.
Here $\U$ denotes {\it unitary} type, $\Sp$ denotes {\it symplectic type}, and $\SO({\rm even})$, $\SO({\rm odd})$ and ${\rm O}$ refer to {\it orthogonal} types.
More concretely, the distribution of low-lying zeros of $L$-functions in $\Fcal$ is expected to be one of
five density functions $W_G$ of eigenvalues of random matrices:
\begin{align}
&W_{\rm U}(x)=1, \qquad
W_{\Sp}(x)=1-\dfrac{\sin 2\pi x}{2\pi x},
\qquad W_{\SO({\rm even})}(x) = 1+\dfrac{\sin 2\pi x}{2\pi x},
\label{density RMT}\\
&
W_{\SO({\rm odd})}(x) = 1-\dfrac{\sin 2\pi x}{2\pi x}+\delta_0,
\qquad W_{\rm O}(x)=1+\dfrac{1}{2}\delta_0, \notag
\end{align}
where $\delta_0$ is the Dirac delta distribution on $\RR$.

Later, inspired by \cite{KowalskiSahaTsimerman} and \cite{KnightlyReno}, the first author \cite{Sugiyama} predicted the {\it weighted density conjecture} on the weighted one-level density and the change of the density function.
As a special form of the weighted density conjecture,
we consider a family $\{L(s, \Acal)\}_{\Acal\in\Fcal}$ of $L$-functions weighted by special values $L(\s, \Acal)$
or $|L(\s, \Acal)|^2$ at $\s \in [1/2,1)$.
Here we assume that our $L$-function $L(s, \Acal)$ has an Euler product expression
over all prime numbers, admits a meromorphic continuation to the whole complex plane, satisfies a functional equation which relates its value at $s$ to that at $1-s$, and in some cases may require several other analytic properties.

The one-level density of low-lying zeros of an $L$-function $L(s, \Acal)$ is defined as
\begin{align}\label{def one-level}D(\Acal, \phi):=\sum_{\rho=1/2+i\gamma}\phi\left(\frac{\log Q_\Acal}{2\pi}\gamma\right),
\end{align}
where $\rho$ runs over the zeros of $L(s,\Acal)$ in the critical strip $0<\Re(s)<1$ counted with multiplicity,
$\phi$ is a Schwartz function on $\RR$ such that the Fourier transform
$$\widehat{\phi}(\xi):=\int_{-\infty}^{\infty}\phi(x)e^{-2\pi i \xi x}dx$$
of $\phi$ is compactly supported, and $Q_\Acal$ is the analytic conductor of $L(s,\Acal)$ defined by the product of data appearing in the functional equation of $L(s,\Acal)$.
We do not impose the general Riemann hypothesis for $L(s,\Acal)$, as $\phi$ is naturally extended to a Paley-Wiener function on $\CC$ by the Fourier inversion, and hence $\phi(\frac{\log Q_\Acal}{2\pi}\gamma)$ is also defined for complex $\gamma$.

A special case of the weighted density conjecture is as follows.
Let $\{L(s,\Acal)\}_{\Acal\in \Fcal}$ be a family of $L$-functions such that $\Fcal$ is a multiset divided in terms of the analytic conductors.
Namely, $\Fcal$ is decomposed as $\Fcal=\bigcup_{k=1}^{\infty}\Fcal_k$ with $\Fcal_k=\{\Acal \in \Fcal \mid Q_\Acal \asymp k\}<\infty$.
Assume that the symmetry type of $\Fcal$ is given by $G\in \{\U, \Sp, \SO({\rm even})\}$ as Katz-Sarnak's density conjecture. Then the following limit formula holds:
\begin{align*}\lim_{k\rightarrow \infty}\frac{1}{\#\Fcal_k}\sum_{\Acal\in \Fcal_k}D(\Acal, \phi)=\int_{-\infty}^{\infty}\phi(x)W_{G}(x)dx,
\end{align*}
where $W_{G}$ is the density function of eigenvalues of random matrices as in \eqref{density RMT}.
Then, the first author \cite{Sugiyama} predicted that,
for any fixed $\s\in [1/2, 1)$, there should exist
a density function $W_{G,\s}$ such that
\begin{align}
&\lim_{k\rightarrow \infty}\frac{1}{\sum_{\Acal \in \Fcal_k} L(\s, \Acal)}\sum_{\Acal \in \Fcal_k} L(\s, \Acal) D(\Acal, \phi) = \int_{-\infty}^{\infty} \phi(x)W_{G, \s}(x)dx, \ \ G=\Sp \text{ or }\ \SO({\rm even}), \label{weighted SpSO}\\
&\lim_{k\rightarrow \infty}\frac{1}{\sum_{\Acal \in \Fcal_k} |L(\s, \Acal)|^2}\sum_{\Acal \in \Fcal_k} |L(\s, \Acal)|^2 D(\Acal, \phi) = \int_{-\infty}^{\infty} \phi(x)W_{{\rm U}, \s}(x)dx, \quad G={\rm U} \label{weighted U}
\end{align}
for any Paley-Wiener functions $\phi$ on $\CC$. Moreover, he predicted that
$W_{G, \s} \neq W_{G}$ should hold
if and only if $\s = 1/2$.

The weighted density conjecture along the horizontal line $1/2\le\s<1$ as described above
has been supported by several pieces of evidence.
The first author \cite{Sugiyama} gave the first result of this type by
considering a symplectic case of \eqref{weighted SpSO}.
He observed the change of the symmetry type for
the family of symmetric square $L$-functions attached to Hilbert modular forms weighted by its special values at $\s \in [1/2,1]$, where the density function is given by
\[
W_{{\rm Sp},\s}(x)
:=
\begin{cases}
W_{\rm Sp}(x), & \s\neq 1/2,\\[1mm]
1+\dfrac{\sin 2\pi x}{2\pi x}-\dfrac{2\sin^2(\pi x)}{(\pi x)^2}, & \s=1/2.
\end{cases}
\]
His observation leads to the weighted density conjecture for a general family of $L$-functions in \cite{Sugiyama}.
Following up on this work, the authors \cite{SugiyamaChacha} considered a unitary case \eqref{weighted U} where $\Fcal$ consists of Dirichlet $L$-functions. They observed the change of the symmetry type as
\begin{align}\label{WUs}
W_{{\rm U},\s}(x) :=
\begin{cases}
W_{\rm U}(x), & \s\neq 1/2, \\[1mm]
1-\dfrac{\sin^2(\pi x)}{(\pi x)^2}, & \s=1/2.
\end{cases}
\end{align}
Although there is no literature on any orthogonal cases of \eqref{weighted SpSO},
Knightly and Reno \cite{KnightlyReno}
studied the weighted one-level density for the standard $L$-functions attached to elliptic modular forms only when $\s=1/2$.
Since the case $\s \in (1/2,1)$ is computed similarly to the case $\s=1/2$,
their result is extended to the general case $\s \in [1/2,1)$ with minor modification. As a consequence, for standard $L$-functions attached to modular forms, we have
$$
W_{{\rm SO(even)},\s}(x)
:=
\begin{cases}
W_{\rm SO(even)}(x), & \s \neq 1/2,\\
W_{\rm Sp}(x), & \s=1/2.
\end{cases}
$$
Furthermore, Fazzari \cite{Fazzari} formulated a weighted one-level density conjecture
for several families of $L$-functions, and gave expected explicit formulas of $W_{G,1/2}$ which are consistent to random matrix theory.
Note that the evidence above coincides with Fazzari's conjectural forms of $W_{G,1/2}$.

\subsection{Main results}

In this article, we consider a continuous family $\{\zeta(s+it)\}_{t>0}$ consisting of
shifted Riemann zeta functions.
We remark that $\zeta_t(s):=\zeta(s+it)$
has an Euler product expression,
admits a meromorphic continuation to the whole complex plane, and
satisfies the functional equation
\begin{align*}
\pi^{-(s+it)/2}\Gamma\left(\frac{s+it}{2}\right)\zeta_t(s) = \pi^{-(1-s-it)/2}\Gamma\left(\frac{1-s-it}{2}\right)\zeta_{-t}(1-s).
\end{align*}
We also remark that $(s-1+it)\zeta_t(s)$ is an entire function of order one, and we may take $|t|$ as the analytic conductor of $\zeta_t$.

In this paper, we consider the low-lying zeros for the family
$$ \{\zeta_t(s)\}_{t>0}=\bigcup_{T>0}\{\zeta_t(s)\}_{t\in [T,2T]}. $$
Fix a large $T>0$ and $t\in \RR$ with $t\in [T,2T]$. As in \eqref{def one-level}, the one-level density of low-lying zeros of $\zeta_t(s)$ is defined by
$$D_T(\zeta_t, \phi)=\sum_{\rho'=1/2+i\gamma'}\phi\left(\frac{\log T}{2\pi}\gamma'\right)=\sum_{\rho=1/2+i\gamma}\phi\left(\frac{\log T}{2\pi}(\gamma-t)\right),$$
where $\rho'=1/2+i\gamma'$ and $\rho=1/2+i\gamma$ each runs over the zeros of $\zeta_t(s)$ and $\zeta(s)$, respectively, in the region $0<\Re(s)<1$, both counted with multiplicity.
We consider the following continuous average of the one-level densities for $\{\zeta_t\}_{t \in [T, 2T]}$:
\begin{align*}
\frac{1}{\int_{T}^{2T}|\zeta_t(\s)|^{2} dt}\int_{T}^{2T}|\zeta_t(\s)|^{2}D_T(\zeta_t,\phi)dt, \qquad 1/2\le \s\le 1.
\end{align*}
This is an analogue of the discrete average presented on the left-hand side of \eqref{weighted U}.
We recall the unweighted one-level density for $\{\zeta_t\}_{t>0}$ given
by Conrey \cite{Conrey}.

\begin{thm}[{\cite[\S5.3.1]{Conrey}}]\label{Conrey result}
For any Schwartz function $\phi$ on $\RR$ with $\supp(\widehat{\phi})\subset[-2,2]$,
we have
$$\frac{1}{T}\int_{T}^{2T}D(\zeta_t,\phi) dt =\int_{-\infty}^{\infty} \phi(x)W_{\rm U}(x)dx+\Ocal\left(\frac{1}{\log T}\right)$$
as $T\rightarrow \infty$.
Hence the symmetry type of $\{\zeta_t\}_{t>0}$ is unitary.
\end{thm}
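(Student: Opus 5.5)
We will prove Theorem \ref{Conrey result} through the explicit formula, averaged over $t\in[T,2T]$. Throughout, $D(\zeta_t,\phi)$ means $D_T(\zeta_t,\phi)$, the normalization by $\log T$ used just before the statement. Set $L=\log T$ and $h_t(r)=\phi\bigl(\frac{L}{2\pi}(r-t)\bigr)$. Because $\widehat\phi$ is compactly supported, $\phi$ extends to an entire function of exponential type that decays rapidly on horizontal strips. So we may apply the Weil explicit formula to $\zeta$ with test function $h_t$, shifting contours in the usual way; this does not use RH.

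The explicit formula gives three contributions to $D_T(\zeta_t,\phi)$.

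\emph{Gamma-factor term.} This is $\frac{1}{2\pi}\int h_t(r)\,\Re\frac{\Gamma'}{\Gamma}\bigl(\frac14+\frac{ir}{2}\bigr)\,dr$ together with the $-\log\pi$ piece. Since $\phi$ concentrates $r$ within $O(1/L)$ of $t\asymp T$, Stirling gives $\frac12\log(|r|/2)=\frac12\log T+O(1)$ on that range. After the substitution $x=\frac{L}{2\pi}(r-t)$, this term becomes $\widehat\phi(0)\bigl(1+O(1/L)\bigr)=\int\phi(x)W_{\rm U}(x)\,dx+O(1/L)$.

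\emph{Pole terms.} The contributions $h_t(\pm i/2)$ are $\phi$ evaluated at a point with real part $\asymp -TL$ and imaginary part $\asymp L$. The exponential type of $\phi$ makes this $\ll T^{c}$ times a decay of $\phi$ that is faster than any power of $TL$. Hence these terms are negligible.

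\emph{Prime sum.} This is
\[
-\frac{1}{L}\sum_{n}\frac{\Lambda(n)}{\sqrt n}\Bigl(\widehat\phi\Bigl(\frac{\log n}{L}\Bigr)n^{-it}+\widehat\phi\Bigl(-\frac{\log n}{L}\Bigr)n^{it}\Bigr).
\]
The support condition restricts the sum to $n\le T^{2}$.

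\emph{The key step} is averaging the prime sum over $t$. We have $\frac1T\int_T^{2T}n^{\mp it}\,dt\ll \frac{1}{T\log n}$ for $n\ge2$. This gives
\[
\frac1T\int_T^{2T}(\text{prime sum})\,dt\ll \frac{1}{TL}\sum_{n\le T^2}\frac{\Lambda(n)}{\sqrt n\,\log n}\ll\frac{T}{TL^2}\ll \frac{1}{L^{2}},
\]
since $\sum_{n\le X}\Lambda(n)n^{-1/2}/\log n\ll X^{1/2}/\log X$ with $X=T^2$. This is exactly where $\supp\widehat\phi\subset[-2,2]$ is needed: with support up to $2+\delta$ the bound would be $T^{\delta}/L^{2}$ and would no longer be small.

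Averaging the Gamma term and the pole terms over $t$ is harmless, because their asymptotics hold uniformly for $t\in[T,2T]$. Combining the three contributions gives the main term $\int\phi W_{\rm U}+O(1/L)$.

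The main obstacle is making the explicit formula rigorous for the non-compactly-supported $h_t$ without RH. This requires justifying the contour shifts using the growth of $\phi$ off the real axis, and controlling the error in the Stirling approximation uniformly in $t$. Both are standard but need care.
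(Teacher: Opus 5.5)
Your argument is correct and is essentially the paper's route: the paper only cites Conrey for this statement, but its Section 3 uses exactly your decomposition (the Hughes--Rudnick explicit formula, Stirling for the $\Omega$-term, negligible pole terms, then the prime sum), and in the unweighted case the ratio $I_m(\s,T)/I(\s,T)$ is simply replaced by $\frac{1}{T}\int_T^{2T}n^{\mp it}\,dt\ll 1/(T\log n)$, which with $n\le T^2$ gives your $O(1/(\log T)^2)$ bound. Two harmless slips: the Stirling step should read $\Re\psi(\frac14+\frac{ir}{2})=\log(|r|/2)+O(r^{-2})$ with no factor $\frac12$ (a factor $\frac12$ would give $\frac12\widehat\phi(0)$, though you correctly used the full logarithm), and support up to $2+\delta$ costs $T^{\delta/2}/(\log T)^2$ rather than $T^{\delta}/(\log T)^2$.
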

We remark that Conrey \cite[\S5.3.1]{Conrey} also mentioned that the support condition $\supp(\widehat{\phi})\subset[-2,2]$
in Theorem \ref{Conrey result}
can be relaxed to $\supp(\widehat{\phi})\subset [-a,a]$ for an arbitrary $a>0$ if we assume the Riemann hypothesis.
We also remark that a smoothed average version of Theorem \ref{Conrey result} is known by the work of Hughes and Rudnick \cite{HughesRudnick}.

Our result on the weighted one-level density for $\{\zeta_t\}_{t >0}$ is stated as follows.
\begin{thm}\label{main result}
Let $\s$ be a real number satisfying $1/2\le \s\le 1$,
and set
$$\a_\s:=\frac{\s}{2\s+1}.$$
Then for any Schwartz function $\phi$ on $\RR$ with $\supp(\widehat\phi)\subset (-\a_\s,\a_\s)$,
we have
$$\frac{1}{\int_{T}^{2T}|\zeta(\s+it)|^2dt}\int_{T}^{2T} |\zeta(\s+it)|^{2}D_T(\zeta_t,\phi)dt=
\int_{-\infty}^{\infty}\phi(x)W_{{\rm U},\s}(x)dx+\Ocal\left(\frac{1}{\log T}\right)
$$
as $T\rightarrow \infty$,
where $W_{{\rm U},\s}$ is the same function as \eqref{WUs}.
\end{thm}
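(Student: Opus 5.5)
Our plan is to apply the explicit formula to each $\zeta_t$, multiply by $|\zeta(\s+it)|^2$, integrate over $t\in[T,2T]$, and reduce everything to twisted second moments of $\zeta$ on the line $\Re(s)=\s$.

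\textbf{Explicit formula.} For $\phi$ with $\widehat\phi$ compactly supported, and with $L=\log T$, the Weil explicit formula gives
\[
D_T(\zeta_t,\phi)=\widehat\phi(0)\frac{\log (t/2\pi)}{L}-\frac{2}{L}\sum_{n\ge1}\frac{\Lambda(n)}{\sqrt n}\Re\Bigl(n^{-it}\Bigr)\widehat\phi\Bigl(\frac{\log n}{L}\Bigr)+\Ocal\Bigl(\frac1L\Bigr).
\]
Here the first term comes from the gamma factor together with the pole, and it contributes $\widehat\phi(0)=\int\phi$. Since $\log(t/2\pi)=L+\Ocal(1)$ on $[T,2T]$, this main term is $\widehat\phi(0)+\Ocal(1/L)$. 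The prime sum is finite, because $n\le T^{\a_\s}$. Since the diagonal main term is $\widehat\phi(0)$, and $\int\phi W_{{\rm U},\s}$ with $1-(\sin\pi x/\pi x)^2$ equals $\widehat\phi(0)-\int\widehat\phi(\xi)(1-|\xi|)_+d\xi$, it remains to show the following: the weighted average of the prime sum is $\Ocal(1/L)$ for $\s>1/2$, and equals $-\int\widehat\phi(\xi)(1-|\xi|)_+d\xi+\Ocal(1/L)$ for $\s=1/2$. Since $\supp\widehat\phi\subset(-\a_\s,\a_\s)$ with $\a_\s<1/2<1$, we have $(1-|\xi|)_+=1-|\xi|$ on the support.

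\textbf{Twisted second moment.} We need an asymptotic, uniform in $n\le T^{\a_\s}$, for
\[
I(n):=\int_T^{2T}|\zeta(\s+it)|^2 n^{-it}dt .
\]
We plan to use the approximate functional equation, or a mollifier-type twisted moment in the style of Balasubramanian--Conrey--Heath-Brown. Writing $\zeta(\s+it)\approx\sum_{m\le X}m^{-\s-it}+\chi(\s+it)\sum_{m\le Y}m^{\s-1+it}$, the diagonal terms $m_1=nm_2$ in $\sum\sum m_1^{-\s}m_2^{-\s}\int (m_2/(m_1n))^{it}$ give roughly $T n^{-\s}\zeta(2\s)$ for $\s>1/2$, and $T n^{-1/2}\log(T/n)$ for $\s=1/2$. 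The cross terms involving $\chi$ contribute, for $\s=1/2$, a term of size $T n^{-1/2}\cdot\Ocal(1)$ that is dominated by the log term. The denominator is $\int_T^{2T}|\zeta(\s+it)|^2dt\sim T\zeta(2\s)$, respectively $T\log T$. The off-diagonal terms are bounded by standard mean value estimates (Montgomery--Vaughan), with an error of size $\Ocal(T^{1-\delta})$ provided $n$ is at most a suitable power of $T$. Since $\s\ge1/2$ is non-critical off the line, we will balance the lengths $X,Y$ with $XY\asymp t$. We expect that requiring the off-diagonal error $\ll n^{\s}\cdot T^{\cdots}$ to remain $o(T n^{-\s})$ after summing over $n\le T^{\a}$ against $\Lambda(n)n^{-1/2}$ is exactly what produces the restriction $\a<\s/(2\s+1)$. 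Making this range precise is the main obstacle; the first thing to try is the Hardy--Littlewood approximate functional equation with $X=T^{\theta}$, optimizing $\theta$.

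\textbf{Assembling.} For $\s>1/2$, the contribution of the prime sum is
\[
\frac{2}{L}\sum_n\frac{\Lambda(n)}{n^{1/2+\s}}\widehat\phi\Bigl(\frac{\log n}{L}\Bigr),
\]
which converges absolutely and is $\Ocal(1/L)$, giving $W_{\rm U}$. At $\s=1/2$ it becomes
\[
-\frac{2}{L}\sum_n\frac{\Lambda(n)}{n}\Bigl(1-\frac{\log n}{L}\Bigr)\widehat\phi\Bigl(\frac{\log n}{L}\Bigr).
\]
By the prime number theorem and partial summation this equals $-2\int_0^\infty(1-\xi)\widehat\phi(\xi)d\xi+\Ocal(1/L)$. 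Since $\phi$ can be taken even by symmetry of the zeros, this is $-\int\widehat\phi(\xi)(1-|\xi|)d\xi$, which matches $W_{{\rm U},1/2}$. The $\s=1$ endpoint requires care with the pole of $\zeta(2\s)$ not arising; the only change is that the denominator is $\sim T\zeta(2)$.
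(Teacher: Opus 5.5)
Your skeleton matches the paper's: explicit formula, weighting by $|\zeta(\s+it)|^2$, reduction to the ratios $\int_T^{2T}|\zeta(\s+it)|^2n^{-it}dt\big/\int_T^{2T}|\zeta(\s+it)|^2dt$, then summation over prime powers. Your final partial-summation evaluation at $\s=1/2$ is also correct. The gap is the one genuinely analytic input: an asymptotic for the twisted second moment whose error term is explicit and uniform in $n\le T^{\a}$. You never supply it. You call it ``the main obstacle'' and only guess that it will produce $\a<\s/(2\s+1)$. Without a concrete error term no admissible support is established, so the theorem is not proved.

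The paper imports this input from Selberg's Lemma~6: for $1/2<\s\le1$ and $n<T/10$,
\[
\int_T^{2T}|\zeta(\s+it)|^2n^{-it}dt=n^{-\s}\zeta(2\s)T+c_\s n^{\s-1}T^{2-2\s}+\Ocal\bigl(nT^{1-\s}(\log T)^2\bigr)+\Ocal\bigl(n^{\s}T^{1-\s/2}(\log T)^2\bigr),
\]
where $c_\s$ is a constant. The case $\s=1/2$, with main term $n^{-1/2}T\bigl(\log(T/n)+\Ocal(1)\bigr)$, follows by letting $\s\to1/2$. Divide by $\int_T^{2T}|\zeta(\s+it)|^2dt\asymp T$ and sum the last error against $\frac{1}{\log T}\frac{\log p}{\sqrt p}$ over $p\le T^{\a}$. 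This gives $\ll T^{-\s/2+\a(\s+1/2)}\log T$, which is exactly where $\a_\s=\s/(2\s+1)$ comes from.

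Your heuristic for the twisted moment also contains an error that would give the wrong density at $\s=1/2$. Take $\zeta(s)\approx\sum_{m\le X}m^{-s}+\chi(s)\sum_{m\le Y}m^{s-1}$ with $XY=t/2\pi$ and $X,Y\ge n$. The two diagonal sums then give, per unit $t$, $n^{-1/2}\bigl(\log(t/(2\pi n^2))+2\gamma_{\rm E}\bigr)$, not $n^{-1/2}\log(t/n)$. The true main term is $n^{-1/2}\bigl(\log(t/(2\pi n))+2\gamma_{\rm E}\bigr)$. So the remaining terms must supply a further main term $+Tn^{-1/2}\log n$; these are the $\chi$-cross terms, via stationary phase at $t=2\pi m_1m_2n$. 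They therefore cannot be discarded as $\Ocal(Tn^{-1/2})$. If you did discard them, the ratio would be $n^{-1/2}(1-2\log n/\log T)$, and you would obtain $2\int\widehat\phi(x)|x|dx$ instead of $\int\widehat\phi(x)|x|dx$.

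Finally, the reduction to even $\phi$ ``by symmetry of the zeros'' fails for this family. The map $\rho\mapsto1-\rho$ relates $D_T(\zeta_t,\phi)$ to $D_T(\zeta_{-t},\phi)$, and $-t\notin[T,2T]$. The correct reason is that the main term of the twisted moment is real and depends only on $n$. Hence the odd part of $\widehat\phi$ cancels between $n^{-it}$ and $n^{it}$ in the main terms, leaving only error terms.
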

By Theorem \ref{main result}, we conclude that the symmetry type of the one-level density for $\{\zeta_t\}_{t >0}$
weighted by $|\zeta_t(\s)|^2$ changes if and only if $\s=1/2$.
This is evidence toward
the weighted density conjecture \eqref{weighted U} for unitary families.

We introduce some known results for the weighted one-level density for $\{\zeta_t\}_{t>0}$.
Fazzari \cite[Theorem 2.1]{Fazzari} proved the same result as Theorem \ref{main result} only when $\s=1/2$, with a relaxed support condition on $\widehat\phi$ but assuming the ratios conjecture and the Riemann hypothesis\footnote{The ratios conjecture for the Riemann zeta function implies the Riemann hypothesis by \cite[Theorem 5.1]{ConreySnaith} and \cite{BettinGonek}.}.
Later, Bettin and Fazzari \cite[Theorem 1.2]{BettinFazzari} gave smoothed average versions of the density of zeros
weighted by $|\zeta(1/2+it)|^{2k}$ with $k=1,2$
without the Riemann hypothesis.
Contrary to their work, we treat the natural average in $t$ and 
special values of $\zeta_t$ at any $\s\in [1/2,1)$.

{\bf Notation.}
In this article, the symbol $\ZZ_{>0}$ stands for the set of all positive integers.
The letter $\gamma_{\rm E}$ denotes the Euler-Mascheroni constant.
We set $\psi(s):=\Gamma'(s)/\Gamma(s)$, which is known as the digamma function.

\section{Twisted second moments of the Riemann zeta function}

In this section, we give asymptotic formulas for the twisted second moment of the Riemann zeta function
\[ I_m(\s,T):=\int_{T}^{2T}|\zeta(\s+it)|^{2}m^{it}dt,\qquad m \in \ZZ_{>0} \]
for any $\s\in[1/2,1)$.
For the untwisted second moment $I(\s,T):=I_1(\s,T)$, the following asymptotic formula is known:
\begin{equation*}
\int_T^{2T}|\zeta(\s+it)|^2dt = \zeta(2\s)T+\Ocal(T^{2-2\s}\log T), \qquad T\rightarrow \infty,
\end{equation*}
for any $\s\in(1/2,1)$.
This is a classical result which dates back to Landau \cite{lan09}
and Schnee \cite[Section 7]{sch09} in 1909, see also \cite[Equation (8.112)]{IvicBook}.

On the critical line, we have the following estimate
\[ \int_{0}^{T}|\zeta(1/2+it)|^2dt = T\log T + (2\gamma_{\rm E}-1-\log(2\pi))T + \Ocal(T^{1/2}\log{T}), \qquad T \rightarrow \infty, \]
see \cite[Theorem A']{IngMV},
which immediately implies
\begin{equation*}
\int_{T}^{2T}|\zeta(1/2+it)|^2dt = T\log T + (2\gamma_{\rm E} - 1 + \log2 - \log\pi)T + \Ocal(T^{1/2}\log{T}), \quad T\rightarrow \infty.
\end{equation*}
The asymptotic estimate was first obtained by Hardy and Littlewood in 1916 \cite[Theorem 2.41]{HL16} and reproved in their 1923 paper \cite[Theorem E]{HL23} using a refined method they developed in their 1921 paper \cite{HL21}, which allows one to estimate the size of the error.
As mentioned in \cite[Footnote $\ddagger$ in p. 274]{IngMV} and also cited in \cite{HL29}, 
the formula above is originally due to Littlewood (1922), with a weaker error estimate.

The second moment estimate we need is the following twisted version for the Riemann zeta function at $\s\in (1/2,1]$ due to Selberg \cite{Selberg}.
\begin{lem}[{\cite[Lemma 6]{Selberg}}]
\label{Selberg original}
Let $T>10$ and $\sqrt{T}\le H \le T$. For $m,n \in \ZZ_{>0}$ such that $m<T/10$, $n<T/10$ and $\gcd(m,n)=1$,
we have for $1/2<\s \le 1$,
\begin{equation} \label{twisted moment at s}
\begin{aligned}
&\int_{T}^{T+H}|\zeta(\s+it)|^2 \left(\frac{m}{n}\right)^{it}dt \\
&\qquad=\int_T^{T+H}\left\{(mn)^{-\s}\zeta(2\s)+(mn)^{\s-1}
\left(\frac{t}{2\pi}\right)^{1-2\s}\zeta(2-2\s)\right\}dt \\
&\qquad\qquad+\Ocal(T^{1-\s}mn(\log T)^2)+\Ocal\left(HT^{-\s/2}\left\{\left(\frac{m}{n}\right)^{\s}+\left(\frac{m}{n}\right)^{-\s}\right\}(\log T)^2\right).
\end{aligned}
\end{equation}
\end{lem}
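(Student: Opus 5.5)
This lemma is quoted from Selberg \cite[Lemma 6]{Selberg}. If I were to reprove it, I would use the approximate functional equation for $\zeta(\s+it)$. For $T\le t\le T+H$ I would take the Hardy--Littlewood form with $x=y=\sqrt{t/2\pi}$:
\[
\zeta(\s+it)=\sum_{k\le x}k^{-\s-it}+\chi(\s+it)\sum_{l\le x}l^{\s-1+it}+\Ocal(t^{-\s/2}),
\]
where $\chi(\s+it)$ is the factor from the functional equation, with $|\chi(\s+it)|\asymp (t/2\pi)^{1/2-\s}$. The plan is to write $|\zeta(\s+it)|^2=\zeta(\s+it)\zeta(\s-it)$, expand the product, multiply by $(m/n)^{it}$ and integrate each of the resulting terms over $[T,T+H]$. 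The dependence of $x$ on $t$ is a technical nuisance. I would remove it by splitting $[T,T+H]$ into short pieces on which $x$ is essentially constant, or by using a smoothed approximate functional equation.

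\emph{Diagonal terms.} The product of the two Dirichlet polynomials gives $\sum_{k,k'\le x}k^{-\s}k'^{-\s}(mk'/nk)^{it}$. The stationary terms are those with $mk'=nk$. Since $\gcd(m,n)=1$ this forces $k=mj$, $k'=nj$, and the sum over $j$ yields $(mn)^{-\s}\zeta(2\s)$ up to a tail of size $\ll (mn)^{-\s}x^{1-2\s}$. In the same way, the product $|\chi|^2\sum_{l,l'}$ of the two $\chi$-parts has diagonal $l=nj$, $l'=mj$ (or the reverse). This gives $(mn)^{\s-1}(t/2\pi)^{1-2\s}\sum_j j^{2\s-2}$. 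The sum over $j$ runs only up to about $x/\max(m,n)$. Its partial sum equals $\zeta(2-2\s)$ plus a term of order $x^{2\s-1}$, and after multiplying by $|\chi|^2$ this term combines with the tail of the first diagonal. This is the standard cancellation that makes the main term exactly $\zeta(2\s)(mn)^{-\s}+\zeta(2-2\s)(mn)^{\s-1}(t/2\pi)^{1-2\s}$. I would carry out this cancellation carefully, since the losses it produces are of size $T^{1-\s}$ times powers of $mn$.

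\emph{Off-diagonal terms.} The non-stationary terms of the first product are integrals of $(mk'/nk)^{it}$. Each is $\ll 1/|\log(mk'/nk)|$, and summing with the usual Montgomery--Vaughan type bound $\sum_{a\ne b}(ab)^{-\s}/|\log(a/b)|$ gives $\Ocal(T^{1-\s}mn(\log T)^2)$; the factor $mn$ enters through the spacing $|mk'-nk|\ge1$. The cross terms of the form $\chi(\s\pm it)(\cdots)^{it}$ carry the phase of $\chi$, which is roughly $-t\log(t/2\pi e)$. I would treat them with first-derivative estimates. Stationary points occur only when $kl\,m/n\approx t/2\pi$, which is impossible since $kl\le x^2=t/2\pi$ and $m,n<T/10$. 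Where the phase is nearly stationary, second-derivative bounds together with the extra factors $(m/n)^{\pm\s}$ give the term $HT^{-\s/2}\{(m/n)^{\s}+(m/n)^{-\s}\}(\log T)^2$. The $\Ocal(t^{-\s/2})$ remainder in the approximate functional equation, multiplied by the Dirichlet polynomials, contributes an error of the same shape once Cauchy--Schwarz and the mean value theorem are applied; this is where the hypothesis $H\ge\sqrt T$ is used.

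I expect the cross terms to be the main obstacle, namely getting uniform control in $m$ and $n$ without introducing extra powers of $mn$. In practice I would simply cite Selberg, as the paper does.
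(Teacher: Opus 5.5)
The paper gives no argument for this lemma beyond the citation of Selberg, so your closing sentence is exactly the paper's route. The reproof you sketch, however, has a genuine gap. With the symmetric choice $x=y=\sqrt{t/2\pi}$, two of your claims fail as soon as $m\neq n$. Put $M=\max(m,n)$ and $\mu=\min(m,n)$.

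\emph{The diagonal tails do not cancel.} Both diagonals ($k=mj$, $k'=nj$ and $l=nj$, $l'=mj$) are truncated at $j\le x/M$, and the two tails add up to
\[
-\frac{(mn)^{-\sigma}(x/M)^{1-2\sigma}}{2\sigma-1}+\Bigl(\frac{t}{2\pi}\Bigr)^{1-2\sigma}\frac{(mn)^{\sigma-1}(x/M)^{2\sigma-1}}{2\sigma-1}
=\frac{x^{1-2\sigma}}{2\sigma-1}\bigl(M^{-\sigma}\mu^{\sigma-1}-M^{\sigma-1}\mu^{-\sigma}\bigr).
\]
This vanishes only for $m=n=1$. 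Integrated over $[T,T+H]$ it is of exact order $HT^{1/2-\sigma}$. For $H=T$ and fixed $m\neq n$ it exceeds both error terms of the lemma whenever $\sigma<1$; for $m=1$, $n=2$, $\sigma=3/4$ it is $T^{3/4}$ against $T^{5/8}(\log T)^2$.

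\emph{The cross terms do have stationary points.} The cross term $\chi(\sigma+it)\sum_l l^{\sigma-1+it}\sum_{k'}k'^{-\sigma+it}(m/n)^{it}$ has phase derivative $\log\frac{2\pi lk'm}{tn}$. It is therefore stationary at $lk'=\frac{t}{2\pi}\cdot\frac{n}{m}$, which lies well inside the range $lk'\le x^2=t/2\pi$ whenever $n<m$. The other cross term has interior stationary points when $m<n$. The hypothesis $m,n<T/10$ does nothing to prevent this. These stationary points produce a genuine main term of order $HT^{1/2-\sigma}$; evaluating it requires summing a restricted divisor function against an additive character modulo $\min(m,n)$. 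It is exactly this term that cancels the residual above. So two incorrect steps offset each other, and the first- and second-derivative bounds you invoke cannot dispose of the cross terms.

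The clean fix is to let the lengths depend on $m/n$:
\begin{itemize}
\item expand $\zeta(\sigma+it)$ by the approximate functional equation with $x_1=\sqrt{tm/(2\pi n)}$ and $y_1=\sqrt{tn/(2\pi m)}$;
\item expand $\overline{\zeta(\sigma+it)}$ as the conjugate of the approximate functional equation with $x_2=y_1$ and $y_2=x_1$.
\end{itemize}
Then both diagonals run over $j\le\sqrt{t/(2\pi mn)}$. Their $1/(2\sigma-1)$ tails cancel identically, and the diagonal gives the stated main term up to negligible remainders. Meanwhile the stationary points of the cross terms can only occur at the edge $kl'=x_1y_2$, resp.\ $lk'=y_1x_2$, of the summation range, so only endpoint contributions are left for the derivative tests.

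The hypothesis $m,n<T/10$ is what keeps all four lengths above $1$, as the Hardy--Littlewood formula requires. The remainders $x_i^{-\sigma}$ and $t^{1/2-\sigma}y_i^{\sigma-1}$ then carry powers of $m/n$, which is consistent with the shape of the last error term. The alternative is to keep $x=y$ and actually carry out the stationary-phase evaluation, which is considerably harder to do uniformly in $m$ and $n$.
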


Applying Lemma \ref{Selberg original}, we immediately obtain the following asymptotic estimate for $I_m(\s,T)$.
\begin{lem}\label{Selberg sigma}
Let $T>10$ and $m \in \ZZ_{>0}$. Then for $m<T/10$ and $1/2<\s \le 1$,
we have
\begin{align}
I_m(\s,T) 
&= m^{-\s}\zeta(2\s)T+m^{\s-1}
\frac{(2\pi)^{2\s-1}\zeta(2-2\s)(2^{2-2\s}-1)}{2-2\s}T^{2-2\s} \label{main Im(s)}\\
&\qquad+\Ocal(mT^{1-\s}(\log T)^2)+\Ocal\left(m^{\s}T^{1-\s/2}(\log T)^2\right).\label{error Im(s)}
\end{align}
\end{lem}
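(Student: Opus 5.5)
We plan to derive Lemma \ref{Selberg sigma} from Lemma \ref{Selberg original} by taking $n=1$ and covering $[T,2T]$ with short intervals. Since $\gcd(m,1)=1$ holds automatically, the only issue is the range of $t$. Lemma \ref{Selberg original} requires $H\le T$ and $m<T/10$, where $T$ is the left endpoint. Starting at $T$, we may take $H=T$ directly, so $[T,T+H]=[T,2T]$ in a single application. The hypotheses $T>10$, $\sqrt T\le H=T\le T$ and $m<T/10$ are then all satisfied.

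With $H=T$ and $n=1$, the error terms in \eqref{twisted moment at s} become
\[
\Ocal(T^{1-\s}m(\log T)^2)+\Ocal\bigl(T\cdot T^{-\s/2}(m^{\s}+m^{-\s})(\log T)^2\bigr).
\]
Because $m\ge1$, we have $m^{-\s}\le m^{\s}$, so the second term is $\Ocal(m^{\s}T^{1-\s/2}(\log T)^2)$. These are exactly the error terms \eqref{error Im(s)}.

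It remains to evaluate the main term integral. The first piece is $\int_T^{2T}m^{-\s}\zeta(2\s)\,dt=m^{-\s}\zeta(2\s)T$. For the second piece, with $1/2<\s<1$, we compute
\[
\int_T^{2T}\Bigl(\frac{t}{2\pi}\Bigr)^{1-2\s}dt=(2\pi)^{2\s-1}\frac{(2T)^{2-2\s}-T^{2-2\s}}{2-2\s}=(2\pi)^{2\s-1}\frac{(2^{2-2\s}-1)}{2-2\s}T^{2-2\s}.
\]
Multiplying by $m^{\s-1}\zeta(2-2\s)$ gives \eqref{main Im(s)}.

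The only delicate point is the endpoint $\s=1$. There $\zeta(2-2\s)$ has a pole, while $(2^{2-2\s}-1)/(2-2\s)\to0$, so the formula must be read as a limit. We interpret it as follows. For $\s=1$, Selberg's second main term $(mn)^{\s-1}(t/2\pi)^{1-2\s}\zeta(2-2\s)$ is understood in the same limiting sense, or alternatively it is $\Ocal(1/t)$-sized after combination. Its integral over $[T,2T]$ is then bounded by $\Ocal(\log T)$ or $\Ocal(1)$, which is absorbed into $\Ocal(mT^{1-\s}(\log T)^2)=\Ocal(m(\log T)^2)$. Correspondingly, the middle term of \eqref{main Im(s)} is interpreted by continuity at $\s=1$, where $\zeta(2-2\s)(2^{2-2\s}-1)/(2-2\s)\to -\tfrac12\log 2$, and it is absorbed into the error either way. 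Apart from this endpoint convention, the argument is a direct substitution.
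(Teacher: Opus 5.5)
Your argument is the same as the paper's, which obtains the lemma ``immediately'' by taking $n=1$ and $H=T$ in Lemma~\ref{Selberg original}, bounding $m^{-\s}\le m^{\s}$ in the second error term, and integrating the two main terms over $[T,2T]$. One correction to your endpoint remark: at $\s=1$ there is no pole, since $\zeta(2-2\s)=\zeta(0)=-1/2$, and $(2^{2-2\s}-1)/(2-2\s)$ tends to $\log 2$, not $0$; so the middle term of \eqref{main Im(s)} is read by continuity as $-\pi\log 2=\zeta(0)\int_T^{2T}(t/2\pi)^{-1}\,dt$, which agrees exactly with Selberg's term (your limiting value $-\tfrac12\log 2$ is correct, and absorbing this $\Ocal(1)$ into the error is harmless but unnecessary).
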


Next, we extend the result of Lemma \ref{Selberg original} to $\s=1/2$.
\begin{lem}\label{Selberg 1/2}
Let $T>10$ and $\sqrt{T}\le H \le T$.
Then for $m,n \in \ZZ_{>0}$ satisfying $m<T/10$, $n<T/10$ and $\gcd(m,n)=1$, we have
\begin{equation}\label{eq:Selberg_1/2}
\begin{aligned}
&\int_{T}^{T+H}|\zeta(1/2+it)|^2 \left(\frac{m}{n}\right)^{it}dt\\
&\qquad= (2\gamma_{\rm E} -\log 2\pi -\log mn)
(mn)^{-1/2}H
\\
&\qquad\qquad
+(mn)^{-1/2}\{(T+H)\log (T+H)-T\log T-H\}
\\
&\qquad\qquad
+\Ocal(T^{1/2}mn(\log T)^2)+\Ocal\left(HT^{-1/4}
\left\{\left(\frac{m}{n}\right)^{1/2}+\left(\frac{m}{n}\right)^{-1/2}\right\}(\log T)^2\right).
\end{aligned}
\end{equation}
In particular, taking $H=T$ and $n=1$, we have
\begin{align}
I_m(1/2, T)
&= m^{-1/2}T\log T
-m^{-1/2}(\log m )T + (2\gamma_{\rm E} + \log 2 - \log \pi - 1) m^{-1/2}T
\label{main Im(1/2)}\\
&\qquad+\Ocal(mT^{1/2}(\log T)^2)+\Ocal(m^{1/2}T^{3/4}(\log T)^2).
\label{error Im(1/2)}
\end{align}
\end{lem}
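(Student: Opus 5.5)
Our plan is to obtain Lemma \ref{Selberg 1/2} as the limit $\s\to1/2^+$ of Lemma \ref{Selberg original}, using the fact that both sides of \eqref{twisted moment at s} depend continuously on $\s$. The implied constants in Selberg's error terms must then be uniform for $\s$ in a right neighbourhood of $1/2$. This uniformity is the main thing to check. In Selberg's proof, $|\zeta(\s+it)|^2$ is replaced by an approximate functional equation whose error terms are uniform for $1/2\le\s\le1$. The only non-uniform quantities are the two main terms $\zeta(2\s)$ and $\zeta(2-2\s)$, which have poles at $\s=1/2$. Since the error terms in \eqref{twisted moment at s} contain no negative powers of $\s-1/2$, we expect to confirm uniformity by inspecting the argument. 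If needed, we would instead rerun Selberg's argument directly at $\s=1/2$, where the diagonal terms produce the logarithm from a double pole. The limit route is cleaner, so we commit to it.

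The key computation is the limit of the main term. Set $\s=1/2+\e$ and $x=t/2\pi$. The integrand becomes
\[
(mn)^{-1/2}\Bigl\{(mn)^{-\e}\zeta(1+2\e)+(mn)^{\e}x^{-2\e}\zeta(1-2\e)\Bigr\}.
\]
We use the Laurent expansion $\zeta(1+2\e)=\frac{1}{2\e}+\gamma_{\rm E}+O(\e)$ and $\zeta(1-2\e)=-\frac{1}{2\e}+\gamma_{\rm E}+O(\e)$. Expanding $(mn)^{\mp\e}$ and $x^{-2\e}$ to first order, the pole terms cancel. The bracket then tends to
\[
2\gamma_{\rm E}-\tfrac12\log mn-\tfrac12\log mn+\log x \;=\; 2\gamma_{\rm E}-\log mn+\log t-\log 2\pi.
\]
This convergence is locally uniform in $t\in[T,T+H]$, so we may pass the limit inside the integral.

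Integrating over $t$ and using $\int_T^{T+H}\log t\,dt=(T+H)\log(T+H)-T\log T-H$ gives the main term of \eqref{eq:Selberg_1/2}. The left-hand side converges to the corresponding integral at $\s=1/2$, because $\zeta$ is continuous on the compact segment of integration. The error terms converge to those at $\s=1/2$, namely $T^{1/2}mn(\log T)^2$ and $HT^{-1/4}\{(m/n)^{1/2}+(m/n)^{-1/2}\}(\log T)^2$.

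For the special case we take $H=T$ and $n=1$. The main term is $m^{-1/2}\bigl(2T\log 2T-T\log T-T\bigr)+(2\gamma_{\rm E}-\log2\pi-\log m)m^{-1/2}T$. This simplifies to $m^{-1/2}\bigl(T\log T+(2\log2-1)T\bigr)+(2\gamma_{\rm E}-\log 2-\log\pi-\log m)m^{-1/2}T$, which matches \eqref{main Im(1/2)}. The error terms become $O(mT^{1/2}(\log T)^2)+O((m^{1/2}+m^{-1/2})T^{3/4}(\log T)^2)$, and since $m\ge1$ the latter is $O(m^{1/2}T^{3/4}(\log T)^2)$, giving \eqref{error Im(1/2)}.
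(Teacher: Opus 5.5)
Your proposal is correct and follows the paper's proof. Both let $\s\to1/2^+$ in Selberg's twisted moment, cancel the poles of $\zeta(2\s)$ and $\zeta(2-2\s)$ via the Laurent expansion to obtain the integrand $(mn)^{-1/2}\{\log(t/(2\pi mn))+2\gamma_{\rm E}\}$, and then integrate $\log t$; the uniformity in $\s$ of Selberg's implied constants, which you explicitly flag, is used implicitly by the paper as well.
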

\begin{proof}
Taking the limit of \eqref{twisted moment at s} as $\s\rightarrow 1/2$, we have
\begin{align*}
&\int_{T}^{T+H}|\zeta(1/2+it)|^2 \left(\frac{m}{n}\right)^{it}dt\\
&\qquad= \int_T^{T+H}\lim_{\s \rightarrow 1/2}\left\{(mn)^{-\s}\zeta(2\s)+(mn)^{\s-1}\left(\frac{t}{2\pi}\right)^{1-2\s}\zeta(2-2\s)\right\}dt\\
&\qquad\qquad+\Ocal(T^{1/2}mn(\log T)^2)+\Ocal\left(HT^{-1/4}\left\{\left(\frac{m}{n}\right)^{1/2}+\left(\frac{m}{n}\right)^{-1/2}\right\}(\log T)^2\right).
\end{align*}
Recall
\begin{align}\label{zeta asymp}\zeta(s)=\frac{1}{s-1}+\gamma_{\rm E}+\Ocal(s-1), \quad s \rightarrow 1,\end{align}
which implies
\begin{align*}
(mn)^{-\s}\zeta(2\s) = \frac{(mn)^{-\s}}{2\s-1}+\gamma_{\rm E}(mn)^{-\s}+\Ocal(mn^{-\s}(\s-1/2)), \qquad \s\rightarrow 1/2,
\end{align*}
Similarly, we have by taking $s=2-2\s$ in \eqref{zeta asymp},
\begin{align*}
(mn)^{\s-1}\left(\frac{t}{2\pi}\right)^{1-2\s}\zeta(2-2\s)
&= \frac{(mn)^{\s-1}(\frac{t}{2\pi})^{1-2\s}}{1-2\s}
+\gamma_{\rm E}(mn)^{\s-1}\left(\frac{t}{2\pi}\right)^{1-2\s}
\\
&\qquad+\Ocal\left((mn)^{\s-1}\left(\frac{t}{2\pi}\right)^{1-2\s}(\s-1/2)\right), \quad \s \rightarrow 1/2.
\end{align*}
Hence, combining the above we see that as $\s\rightarrow 1/2$,
\begin{align*}
&
(mn)^{-\s}\zeta(2\s)+(mn)^{\s-1}\left(\frac{t}{2\pi}\right)^{1-2\s}\zeta(2-2\s)
\\
&\qquad=
(mn)^{-\s}\frac{(\frac{t}{2\pi mn})^{1-2\s}-1}{1-2\s}+\gamma_{\rm E}(mn)^{-\s}+\gamma_{\rm E}(mn)^{\s-1}\left(\frac{t}{2\pi}\right)^{1-2\s}+\Ocal_{m,n,t}(\s-1/2).
\end{align*}
Taking the limit as $\s\rightarrow 1/2$, this converges to
\[ (mn)^{-1/2}\log\left(\frac{t}{2\pi m n}\right)+2\gamma_{\rm E}(mn)^{-1/2}. \]
Finally, applying $\int \log t \,dt = t\log t - t$, we immediately obtain \eqref{eq:Selberg_1/2},
and we are done.
\end{proof}

\begin{thm}\label{Im(s)/I(s)}
Let $1/2<\s\le 1$ and
$m\in \ZZ_{>0}$.
Then for $T>10$ and $m<T/10$, we have 
\begin{align*}
\frac{I_m(\s, T)}{I(\s,T)}=m^{-\s}
+\Ocal_\s(m^{\s-1}T^{1-2\s})+\Ocal_\s(mT^{-\s}(\log T)^2)+\Ocal_\s(m^{\s}T^{-\s/2}(\log T)^2).
\end{align*}
\end{thm}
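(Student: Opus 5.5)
We apply Lemma \ref{Selberg sigma} twice, once with general $m$ and once with $m=1$, and then divide. For $m=1$ it gives
\[
I(\s,T)=\zeta(2\s)T+C_\s T^{2-2\s}+\Ocal(T^{1-\s}(\log T)^2)+\Ocal(T^{1-\s/2}(\log T)^2),
\]
where
\[
C_\s=\frac{(2\pi)^{2\s-1}\zeta(2-2\s)(2^{2-2\s}-1)}{2-2\s}.
\]
When $\s=1$ the quotient is read as its limit. The pole of $\zeta(2-2\s)$ is cancelled by the factor $(2^{2-2\s}-1)/(2-2\s)$, which tends to $0$ suitably, so $C_1=\log 2$ up to the usual normalisation. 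In any case $C_\s$ is a finite constant depending only on $\s$.

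Since $1/2<\s\le 1$, we have $2-2\s<1$, and $1-\s/2<1$. Hence $I(\s,T)=\zeta(2\s)T\,(1+\Ocal_\s(T^{1-2\s})+\Ocal_\s(T^{-\s/2}(\log T)^2))$. Here $\zeta(2\s)>0$ because $2\s>1$. In particular $I(\s,T)\asymp_\s T$ for $T$ large, which is the lower bound we need in the denominator.

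Write $I_m(\s,T)=m^{-\s}\zeta(2\s)T+R_m$. By Lemma \ref{Selberg sigma},
\[
R_m=\Ocal_\s(m^{\s-1}T^{2-2\s})+\Ocal(mT^{1-\s}(\log T)^2)+\Ocal(m^\s T^{1-\s/2}(\log T)^2).
\]
Then
\[
\frac{I_m(\s,T)}{I(\s,T)}=m^{-\s}\,\frac{\zeta(2\s)T}{I(\s,T)}+\frac{R_m}{I(\s,T)}.
\]

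For the first term, the expansion of $I(\s,T)$ gives
\[
\frac{\zeta(2\s)T}{I(\s,T)}=1+\Ocal_\s(T^{1-2\s})+\Ocal_\s(T^{-\s/2}(\log T)^2).
\]
So the first term is $m^{-\s}$ plus $m^{-\s}$ times these errors. Since $m\ge1$, we have $m^{-\s}\le m^{\s-1}$ (because $-\s\le\s-1$) and $m^{-\s}\le m^{\s}$. Therefore these errors are absorbed into $\Ocal_\s(m^{\s-1}T^{1-2\s})$ and $\Ocal_\s(m^\s T^{-\s/2}(\log T)^2)$.

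For the second term, dividing $R_m$ by $I(\s,T)\gg_\s T$ gives exactly the three stated error terms. This covers $T$ large enough, depending on $\s$, that $I(\s,T)\ge \zeta(2\s)T/2$. For the bounded range $10<T\le T_0(\s)$, the claim holds trivially by enlarging the implied constants, since $m<T/10$ is bounded there and $I(\s,T)>0$.

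The only delicate point is the uniformity of the constant $C_\s$ near $\s=1$ (and the interpretation at $\s=1$). Since the implied constants are allowed to depend on $\s$, this is harmless, so no real obstacle remains.
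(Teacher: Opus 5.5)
Your proposal is correct and is essentially the paper's argument. Both apply Lemma \ref{Selberg sigma} for $m$ and for $m=1$ and divide using $I(\s,T)\gg_\s T$; the only difference is bookkeeping, since the paper keeps the $T^{2-2\s}$ term inside the main part $F_m$ and uses the decomposition \eqref{decomp of Im(s)}, while you move it into the remainder $R_m$.

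One aside is wrong but harmless: at $\s=1$ there is no pole, because $\zeta(2-2\s)\to\zeta(0)=-1/2$ and $(2^{2-2\s}-1)/(2-2\s)\to\log 2$ (not $0$), so $C_1=-\pi\log 2$. Your argument only uses that $C_\s$ is finite, so nothing changes.
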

\begin{proof}
We write \eqref{main Im(s)} and \eqref{error Im(s)} in Lemma \ref{Selberg sigma} as $F_m$ and $E_m$, respectively.
Then the ratio we want to evaluate is
\begin{align}\label{decomp of Im(s)}
\frac{I_m(\s, T)}{I(\s,T)}=\frac{F_m+E_m}{F_1+E_1}=\frac{F_m}{F_1}+\frac{E_m-\frac{F_m}{F_1}E_1}{F_1+E_1},
\end{align}
We remark that the decomposition \eqref{decomp of Im(s)} is also used in
\cite[Proposition 3.1]{KnightlyReno}, \cite[Corollary 2.10]{Sugiyama} and \cite[Equation (2.3)]{SugiyamaChacha}.
We notice that for $\s\in(1/2,1)$,
\[
\frac{(2\pi)^{2\s-1}(2^{2-2\s}-1)}{2-2\s}
=
(2\pi)^{2\s-1}(\log2+\Ocal(\s-1))
\asymp 1,
\]
thus this is bounded on $(1/2,1]$.
Since
$\zeta(2-2\s)/\zeta(2\s)$ is also bounded on $(1/2,1]$,
the first term of the right-hand side of \eqref{decomp of Im(s)} is
evaluated as
\begin{align*}\frac{F_m}{F_1}
&= \frac{m^{-\s}\zeta(2\s)T+\Ocal(\zeta(2-2\s)m^{\s-1}T^{2-2\s})}{\zeta(2\s)T+\Ocal\left(\zeta(2-2\s) T^{2-2\s}\right)}
= \frac{m^{-\s} + \Ocal(m^{\s-1}T^{1-2\s})}{1 + \Ocal\left(T^{1-2\s}\right)}
\\
&= \left\{m^{-\s}+\Ocal(m^{\s-1}T^{1-2\s})\right\} \times \left\{1+\Ocal(T^{1-2\s})\right\}\\
&= m^{-\s}+\Ocal(m^{\s-1}T^{1-2\s}).
\end{align*}
The second term of the right-hand side of \eqref{decomp of Im(s)} is
estimated as
\begin{align*} &\frac{E_m-\frac{F_m}{F_1}E_1}{F_1+E_1}\\
&\qquad\ll_\s \frac{mT^{1-\s}(\log T)^2 + m^{\s}T^{1-\s/2}(\log T)^2
+(m^{-\s}+m^{\s-1}T^{1-2\s}) T^{1-\s/2}(\log T)^2}{T}\\
&\qquad\ll
mT^{-\s}(\log T)^2+m^{\s}T^{-\s/2}(\log T)^2
+m^{\s-1}T^{1-5\s/2}(\log T)^2
\\
&\qquad\ll
mT^{-\s}(\log T)^2 + m^{\s}T^{-\s/2}(\log T)^2,
\end{align*}
since $1-2\s<0$ for $\s\in(1/2,1]$.
This completes the proof.
\end{proof}

On the critical line, we have the following.
\begin{thm}\label{Im(1/2)/I(1/2)}
Let $T>10$. For $m \in \ZZ_{>0}$ satisfying $m<T/10$, we have
\begin{align*}
\frac{I_m(1/2, T)}{I(1/2,T)} &= m^{-1/2} - m^{-1/2}\frac{\log m}{\log T}
+\Ocal\left(\frac{m^{-1/2}\log m}{(\log T)^2}\right)\\
&\qquad+\Ocal(mT^{-1/2}\log T)+\Ocal(m^{1/2}T^{-1/4}\log T).
\end{align*}
\end{thm}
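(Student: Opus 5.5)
We follow the proof of Theorem \ref{Im(s)/I(s)}, with Lemma \ref{Selberg 1/2} replacing Lemma \ref{Selberg sigma}. Write \eqref{main Im(1/2)} as $F_m$ and \eqref{error Im(1/2)} as $E_m$. Then $F_1 = T\log T + cT$ with $c:=2\gamma_{\rm E}+\log 2-\log\pi-1$, and $E_1=\Ocal(T^{3/4}(\log T)^2)$. We use the same decomposition
\[
\frac{I_m(1/2,T)}{I(1/2,T)}=\frac{F_m}{F_1}+\frac{E_m-\frac{F_m}{F_1}E_1}{F_1+E_1}
\]
as in \eqref{decomp of Im(s)} and treat the two pieces separately.

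\textbf{Main term.} Since $F_m = m^{-1/2}(F_1 - (\log m) T)$, we have exactly
\[
\frac{F_m}{F_1}=m^{-1/2}-m^{-1/2}\frac{\log m}{\log T + c}.
\]
We then expand
\[
\frac{1}{\log T+c}=\frac{1}{\log T}+\Ocal\left(\frac{1}{(\log T)^2}\right).
\]
This yields $m^{-1/2}-m^{-1/2}\frac{\log m}{\log T}+\Ocal\left(\frac{m^{-1/2}\log m}{(\log T)^2}\right)$. We also record $F_m/F_1 \ll m^{-1/2}$, which holds because $m<T/10$ gives $0\le \log m/(\log T+c)<1$ for large $T$.

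\textbf{Error term.} The denominator satisfies $F_1+E_1\gg T\log T$. For the numerator, $E_m \ll mT^{1/2}(\log T)^2+m^{1/2}T^{3/4}(\log T)^2$. Also $\frac{F_m}{F_1}E_1\ll m^{-1/2}T^{3/4}(\log T)^2$, which is absorbed into the $m^{1/2}T^{3/4}(\log T)^2$ term. Dividing by $T\log T$ gives $\Ocal(mT^{-1/2}\log T)+\Ocal(m^{1/2}T^{-1/4}\log T)$.

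This is the one place where the gain over Theorem \ref{Im(s)/I(s)} appears: the extra $\log T$ in the main term saves one logarithm in the error. Combining the two pieces proves the theorem.

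The only delicate step is checking that $\log m/(\log T + c)$ stays bounded and expands uniformly in $m<T/10$. This holds because $c$ is a fixed constant, so the $\Ocal$ constant is absolute. Everything else is routine bookkeeping.
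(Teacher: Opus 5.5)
Your proof is correct and is essentially the paper's argument. You use the same decomposition \eqref{decomp of Im(s)} and the exact identity $F_m/F_1=m^{-1/2}\bigl(1-\log m/(\log T+C)\bigr)$, expanded in $1/\log T$; you then bound the second piece by dividing $mT^{1/2}(\log T)^2+m^{1/2}T^{3/4}(\log T)^2$ by $T\log T$, after absorbing $(F_m/F_1)E_1$. The only cosmetic difference is that you bound $F_m/F_1\ll m^{-1/2}$ directly, whereas the paper carries the factor $m^{-1/2}(1+\log m/\log T)$ before absorbing it.
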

\begin{proof}
We write \eqref{main Im(1/2)} and \eqref{error Im(1/2)} in Lemma \ref{Selberg 1/2} as $F_m$ and $E_m$, respectively.
Then the equation \eqref{decomp of Im(s)} is also valid when $\s=1/2$.
Setting $C:=2\gamma_{\rm E} -\log \pi -1+\log 2$,
we can simplify the first term on the most right-hand side of \eqref{decomp of Im(s)} for $\s=1/2$ to
\begin{align*}
\frac{F_m}{F_1}
&= \frac{m^{-1/2}T\log T - m^{-1/2}(\log m)T + Cm^{-1/2}T}{T\log T+CT}
\\
&= m^{-1/2}\frac{\log T - \log m + C}{\log{T}+C}
= m^{-1/2}\left( 1 - \frac{\log m}{\log{T}+C} \right)
\\
&= m^{-1/2}\left( 1 - \frac{\log m}{\log{T}}
\left\{
1 + \Ocal\left(\frac{1}{\log{T}}\right)
\right\}
\right)
\\
&=m^{-1/2}-m^{-1/2}\frac{\log m}{\log T}
+\Ocal\left(\frac{m^{-1/2}\log m}{(\log T)^2}\right).
\end{align*}
Finally, we evaluate the second term on the most right-hand side of \eqref{decomp of Im(s)} for $\s=1/2$ as
\begin{align*}\frac{E_m-\frac{F_m}{F_1}E_1}{F_1+E_1}
&\ll \frac{mT^{1/2}(\log T)^2 + m^{1/2}T^{3/4}(\log T)^2+m^{-1/2}(1+\frac{\log m}{\log T}) T^{3/4}(\log T)^2}{T\log T}\\
&\ll mT^{-1/2}\log T+m^{1/2}T^{-1/4}\log T.
\end{align*}
This completes the proof.
\end{proof}

\begin{rem}
The main terms in Theorems \ref{Im(s)/I(s)} and
\ref{Im(1/2)/I(1/2)} have the same forms
as those in \cite[Propositions 2.2 and 2.3]{SugiyamaChacha}, respectively.
\end{rem}

\section{Weighted one-level density for the Riemann zeta function}

In this section, we compute the one-level density of
low-lying zeros of $\{\zeta_t\}_{t >0}$
weighted by $\zeta_t(\s)$ for any fixed $\s \in [1/2,1]$.
In what follows, we suppress the dependence on our test functions $\phi$ in the $\Ocal$-symbols.

Set
\[ \Omega(r) := \frac{1}{2}\psi\left(\frac{1}{4}+\frac{ir}
{2}\right)+\frac{1}{2}\psi\left(\frac{1}{4}-\frac{ir}{2}\right)-\log \pi, \qquad r \in \RR. \]
Let $\phi$ be a Schwartz function on $\RR$ such that $\supp \widehat{\phi}$ is compact.
With the aid of \cite[Lemma 2.1]{HughesRudnick},
we have the following explicit formula for the Riemann zeta function:
\begin{align*}D_{T}(\zeta_t, \phi) 
&= \frac{1}{2\pi}\int_{-\infty}^{\infty}\phi\left(\frac{\log T}{2\pi}(r-t)\right)
\Omega(r)dr\\
&\qquad+ \phi\bigg(\frac{\log T}{2\pi}\bigg(-\frac{i}{2}-t\bigg)\bigg)
+\phi\bigg(\frac{\log T}{2\pi}\bigg(\frac{i}{2}-t\bigg)\bigg)\\
&\qquad- \frac{1}{\log T}\sum_{p}\sum_{m=1}^{\infty}\frac{\log p}{p^{m/2}}\left(\widehat{\phi}\left(\frac{m\log p}{\log T}\right)p^{-imt}+
\widehat{\phi}\left(-\frac{m\log p}{\log T}\right)p^{imt}\right),
\end{align*}
where $p$ runs over the set of all prime numbers.
From this, we obtain the asymptotics
\begin{equation}\label{eq:DTzeta}
\begin{aligned}
D_{T}(\zeta_t, \phi)
&= \frac{1}{2\pi}\int_{-\infty}^{\infty}\phi\left(\frac{\log T}{2\pi}(r-t)\right)
\Omega(r)dr
+\Ocal\left(\frac{1}{\log T}\right) \\
&\qquad-\frac{1}{\log T}\sum_{p}\frac{\log p}{p^{1/2}}\left(\widehat{\phi}\left(\frac{\log p}{\log T}\right)p^{-it}+
\widehat{\phi}\left(-\frac{\log p}{\log T}\right)p^{it}\right)\\ 
&\qquad-\frac{1}{\log T}\sum_{p}\frac{\log p}{p}\left(\widehat{\phi}\left(\frac{2\log p}{\log T}\right)p^{-2it}+\widehat{\phi}\left(-\frac{2\log p}{\log T}\right)p^{2it}\right),
\end{aligned}
\end{equation}
as $T\rightarrow\infty$.

By Stirling's formula, we have 
$$
\Omega(t) = \log\frac{|t|}{2\pi} + \Ocal\left(\frac{1}{1+|t|^2}\right), \qquad |t|\ge 1
$$
and for $|x|<(2\pi)^{-1}|t|\log{T}$,
$$
\Omega\left(t+\frac{2\pi}{\log T}x\right)
=
\Omega(t)+\Ocal\left(\frac{1}{1+|t|}\frac{|x|}{\log T}\right), \qquad |t|\ge 1,
$$
see also \cite[Proof of Lemma 2.2]{HughesRudnick}.
Hence
$$
\frac{1}{2\pi}\int_{-\infty}^{\infty}\phi\left(\frac{\log T}{2\pi}(r-t)\right)\Omega(r)dr
=
\frac{\log{|t|}-\log{2\pi}}{\log T}\int_{-\infty}^{\infty}\phi(x)dx+\Ocal\left(\frac{1}{1+|t|}\frac{1}{\log T}\right).
$$
Substituting this into \eqref{eq:DTzeta}, we obtain, as $T\rightarrow\infty$,
\begin{align}
&\frac{1}{I(\s,T)}\int_{T}^{2T}
|\zeta(\s+it)|^{2}D_{T}(\zeta_t, \phi)dt \notag\\
&\qquad= \frac{\widehat{\phi}(0)}{I(\s,T)\log T}\int_{T}^{2T}|\zeta(\s+it)|^2\log{t}\,dt
\notag \\
&\qquad\qquad-\frac{1}{\log T}\sum_{p}\frac{\log p}{p^{1/2}}\bigg\{\widehat{\phi}\left(\frac{\log p}{\log T}\right)
\frac{\overline{I_p(\s,T)}}{I(\s,T)}+
\widehat{\phi}\left(-\frac{\log p}{\log T}\right)\frac{I_{p}(\s,T)}{I(\s,T)}\bigg\}
\notag \\
&\qquad\qquad-\frac{1}{\log T}\sum_{p}\frac{\log p}{p}
\bigg\{\widehat{\phi}\left(\frac{2\log p}{\log T}\right)
\frac{\overline{I_{p^2}(\s,T)}}{I(\s,T)}+
\widehat{\phi}\left(-\frac{2\log p}{\log T}\right)\frac{I_{p^2}(\s,T)}{I(\s,T)}\bigg\}
\notag \\
&\qquad\qquad+\Ocal\left(\frac{1}{\log T}\right)
\notag \\
&\qquad=: M_0(\s,T) + M_1(\s,T) + M_2(\s,T) + \Ocal\left(\frac{1}{\log T}\right).
\label{eq:1leveldensity}
\end{align}

We first deal with the easiest term, $M_0(\s,T)$, for all $1/2\le\s\le1$.
\begin{lem}
\label{formula of M0}
Let $1/2\le\s\le1$.
Then for any Schwartz function $\phi$ on $\RR$, we have
$$M_0(\s,T)=\widehat{\phi}(0)+\Ocal\left(\frac{1}{\log{T}}\right), \qquad T\rightarrow \infty.$$
\end{lem}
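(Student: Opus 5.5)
Writing $M_0(\s,T)$ as $\widehat{\phi}(0)$ times the ratio
\[
\frac{1}{\log T}\cdot\frac{\int_T^{2T}|\zeta(\s+it)|^2\log t\,dt}{I(\s,T)},
\]
the task reduces to showing that this ratio equals $1+\Ocal(1/\log T)$.

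The plan is simply to use that $\log t$ is essentially constant on the range of integration. For $t\in[T,2T]$ we have
\[
\log T\le \log t\le \log T+\log 2 .
\]
Since the weight $|\zeta(\s+it)|^2$ is nonnegative, this gives
\[
I(\s,T)\log T\;\le\;\int_T^{2T}|\zeta(\s+it)|^2\log t\,dt\;\le\;I(\s,T)\,(\log T+\log 2).
\]
Dividing by $I(\s,T)\log T$, which is positive, the ratio lies in $[1,\,1+\log 2/\log T]$.

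Hence $M_0(\s,T)=\widehat{\phi}(0)\bigl(1+\Ocal(1/\log T)\bigr)=\widehat{\phi}(0)+\Ocal(1/\log T)$, uniformly for $1/2\le\s\le1$. No asymptotic for $I(\s,T)$ is needed, only its positivity, so neither Lemma~\ref{Selberg sigma} nor Lemma~\ref{Selberg 1/2} has to be invoked. In particular the endpoint $\s=1$, where $\zeta(2-2\s)$ has a pole in the main term of Lemma~\ref{Selberg sigma}, causes no difficulty.

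The only point needing care is the bookkeeping that produced $M_0$ in \eqref{eq:1leveldensity}. There one replaces $\log(|t|/2\pi)$ by $\log t$, and the discarded $-\log 2\pi/\log T$ term contributes $\Ocal(1/\log T)$ after weighting and normalizing. The error $\Ocal\bigl((1+|t|)^{-1}(\log T)^{-1}\bigr)$ is likewise absorbed into the $\Ocal(1/\log T)$ already present in \eqref{eq:1leveldensity}. Both facts follow from the same positivity argument, so I do not expect any step to be a real obstacle.
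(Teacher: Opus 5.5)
Your argument is correct and is essentially the paper's proof: the paper likewise uses $\log t=\log T+\Ocal(1)$ on $[T,2T]$ and the nonnegativity of $|\zeta(\s+it)|^2$ to get $\int_T^{2T}|\zeta(\s+it)|^2\log t\,dt=I(\s,T)(\log T+\Ocal(1))$, with no moment asymptotics needed. One small correction to a side remark that does not affect the proof: nothing singular happens at $\s=1$, since $\zeta(2-2\s)=\zeta(0)=-1/2$ there and $(2^{2-2\s}-1)/(2-2\s)\to\log 2$ is only a removable $0/0$; the genuine poles in Lemma~\ref{Selberg sigma} arise as $\s\to 1/2$.
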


\begin{proof}
First, we note that $\log t=\log T+\Ocal(1)$ for $t \in [T,2T]$, thus for any $1/2\le\s\le1$, we have
\begin{align*}
\int_{T}^{2T}|\zeta(\s+it)|^2\log t \,dt
&= (\log T) \int_{T}^{2T}|\zeta(\s+it)|^2 dt
+\Ocal\left(\int_{T}^{2T}|\zeta(\s+it)|^2 dt\right)
\\ &=
I(\s,T)
\left(\log T+\Ocal(1)\right).
\end{align*}
Hence, we immediately see that
\[
M_0(\s,T)
=
\widehat{\phi}(0)
\left(1+\Ocal\left(\frac{1}{\log T}\right)\right)
=
\widehat{\phi}(0)
+
\Ocal\left(\frac{1}{\log T}\right),
\]
which concludes the proof.
\end{proof}

Next we prove Theorem \ref{main result} in the case $1/2<\s\le1$.
We write it separately as a proposition.

\begin{prop}\label{formula of weighted density by s}
Let $1/2<\s\le1$ and
set
\[ \a_\s := \dfrac{\s}{2\s+1}. \]
Then for any Schwartz function $\phi$ on $\RR$ such that $\supp \widehat{\phi}\subset (-\a_\s, \a_\s)$, we have
\begin{align*}
\frac{1}{I(\s,T)}\int_{T}^{2T}
|\zeta(\s+it)|^{2}D_{T}(\zeta_t, \phi)dt
=\widehat{\phi}(0)
+\Ocal_{\s}\left(\frac{1}{\log T}\right), \qquad T\rightarrow \infty.
\end{align*}
\end{prop}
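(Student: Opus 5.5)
The plan is to start from the decomposition \eqref{eq:1leveldensity}. Lemma \ref{formula of M0} already gives $M_0(\s,T)=\widehat\phi(0)+\Ocal(1/\log T)$, so it remains to show that $M_1(\s,T)$ and $M_2(\s,T)$ are both $\Ocal_\s(1/\log T)$. Since $\supp\widehat\phi\subset(-\a_\s,\a_\s)$, the sum in $M_1$ only involves primes $p<T^{\a_\s}$. The sum in $M_2$ only involves $p^2<T^{\a_\s}$. Because $\a_\s<1$, the condition $m<T/10$ holds for $m=p$ and $m=p^2$ once $T$ is large, so Theorem \ref{Im(s)/I(s)} applies to $I_p/I$ and $I_{p^2}/I$. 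We also have $\overline{I_m(\s,T)}=I_{m}(\s,T)$ conjugated, and the conjugate satisfies the same expansion because the main term $m^{-\s}$ is real.

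For $M_1$, insert $I_p(\s,T)/I(\s,T)=p^{-\s}+\Ocal(p^{\s-1}T^{1-2\s})+\Ocal(pT^{-\s}(\log T)^2)+\Ocal(p^{\s}T^{-\s/2}(\log T)^2)$ and bound each contribution using $\widehat\phi\ll1$ together with the prime number theorem (Chebyshev bounds suffice).

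The main term contributes
\[
\frac{1}{\log T}\sum_{p<T^{\a_\s}}\frac{\log p}{p^{1/2+\s}}.
\]
Since $1/2+\s>1$, this sum converges, so the contribution is $\Ocal_\s(1/\log T)$. This is exactly where $\s>1/2$ is used; at $\s=1/2$ the sum would produce a genuine main term.

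The error terms are controlled as follows.
\begin{itemize}
\item The term $p^{\s-1}T^{1-2\s}$ gives $\frac{T^{1-2\s}}{\log T}\sum_{p<X}p^{\s-3/2}\log p\ll T^{1-2\s}X^{\s-1/2}$ with $X=T^{\a_\s}$. This is a negative power of $T$ because $\a_\s<2$.
\item The term $pT^{-\s}(\log T)^2$ gives $\ll T^{-\s}(\log T)X^{3/2}$. This requires $\tfrac32\a_\s<\s$, which holds.
\item The term $p^{\s}T^{-\s/2}(\log T)^2$ gives $\ll T^{-\s/2}(\log T) X^{\s+1/2}$. This requires $(\s+\tfrac12)\a_\s<\s/2$, i.e. $\a_\s<\s/(2\s+1)$. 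That is precisely the hypothesis, and it is the binding constraint that explains the choice of $\a_\s$. Because the support is strictly inside $(-\a_\s,\a_\s)$, we get a genuine power saving $T^{-\delta}$, which is $o(1/\log T)$.
\end{itemize}

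For $M_2$, use the same substitution with $m=p^2$ and $p<X^{1/2}$. The main term is $\frac{1}{\log T}\sum_p \log p\, p^{-1-2\s}\ll 1/\log T$. The error terms give at most $T^{1-2\s}X^{\s}$, $T^{-\s}(\log T)X^{3/2}$ and $T^{-\s/2}(\log T)X^{\s}$, all dominated by the corresponding bounds for $M_1$, hence negligible.

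Combining these estimates with Lemma \ref{formula of M0} gives the proposition. The main obstacle is the bookkeeping of the third error term from Theorem \ref{Im(s)/I(s)}, since it is the one that forces the support restriction. One must be careful that the factor $p^{-1/2}$ in $M_1$ combines with $p^{\s}$ to give $p^{\s-1/2}\log p$, whose sum up to $X$ is $\asymp X^{\s+1/2}$. Uniformity of implied constants in $\s$ is not needed, since $\s$ is fixed.
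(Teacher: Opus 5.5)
Your argument follows the same route as the paper: start from \eqref{eq:1leveldensity}, use Lemma~\ref{formula of M0} for $M_0$, and insert Theorem~\ref{Im(s)/I(s)} with $m=p$ and $m=p^2$. The main terms vanish in the limit because $\sum_p \log p\,p^{-1/2-\sigma}$ and $\sum_p \log p\,p^{-1-2\sigma}$ converge, and the error terms are bounded by partial summation. The $M_1$ term coming from $p^{\sigma}T^{-\sigma/2}(\log T)^2$ is the one that forces $\alpha<\sigma/(2\sigma+1)$. Two points need repair, one of which is an actual error in your bookkeeping.

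\textbf{The first error term of $M_2$.} Your stated bound is wrong, and so is the claim that it is dominated by the corresponding $M_1$ bound. With $m=p^2$, the term $m^{\sigma-1}T^{1-2\sigma}$ contributes
\[
\frac{T^{1-2\sigma}}{\log T}\sum_{p\le X^{1/2}}p^{2\sigma-3}\log p .
\]
Since $2\sigma-3\le -1$, the sum is bounded for $\sigma<1$ and is $\ll \log T$ for $\sigma=1$. So this contribution is $\ll T^{1-2\sigma}$, which is negligible. Your bound $T^{1-2\sigma}X^{\sigma}$ is larger than the $M_1$ bound $T^{1-2\sigma}X^{\sigma-1/2}$. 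With $X=T^{\alpha}$ it equals $T^{1-2\sigma+\sigma\alpha}$, a positive power of $T$ for admissible $\alpha$ close to $\alpha_\sigma$ whenever $\sigma<1/\sqrt3$; for example, $\sigma=0.55$, $\alpha=0.26<\alpha_\sigma$ gives exponent $0.043$. So your ``dominated, hence negligible'' step fails as written, though the correct computation above fixes it immediately.

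\textbf{The choice of $X$.} Set $X=T^{\alpha}$ from the outset, with $\alpha<\alpha_\sigma$ chosen so that $\supp\widehat\phi\subset[-\alpha,\alpha]$. This is possible because the support is a compact subset of the open interval. If you literally take $X=T^{\alpha_\sigma}$, the third $M_1$ error is only $\Ocal(\log T)$, and your stated requirement ``$\alpha_\sigma<\sigma/(2\sigma+1)$'' is false. You allude to the strict inclusion, so this is only a matter of writing it correctly.
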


\begin{proof}
Recall the equality \eqref{eq:1leveldensity}.
The first term $M_0(\s,T)$ is given by Lemma \ref{formula of M0}.
Hence it suffices to show that, if $\supp\widehat{\phi}\subset [-\a,\a]$ with $\a<\a_\s$, we have
\begin{equation}\label{M1M2est}
    M_1(\s,T)+M_2(\s,T) \ll_\s \frac{1}{\log{T}}.
\end{equation}
In what follows, we let $\supp\widehat{\phi}\subset [-\a,\a]$ for some $\a \in (0,1)$.

We first compute the contribution of $M_1(\s,T)$.
In the summation, $p$ varies over the primes with $p\le T^\a$ by the support condition on $\widehat\phi$.
Then since $\a<1$, all such primes satisfy $p<T/10$ for any sufficiently large $T$.
Thus Theorem \ref{Im(s)/I(s)} yields
\begin{equation}\label{M_1(s) formula}
\begin{aligned}
M_1(\s,T)
&= -\frac{1}{\log T}\sum_{p}\frac{\log p}{p^{1/2}}
\left\{\widehat{\phi}\left(\frac{\log p}{\log T}\right)
+\widehat{\phi}\left(-\frac{\log p}{\log T}\right)\right\}\\
&\qquad\times (p^{-\s}
+\Ocal_\s(p^{\s-1}T^{1-2\s}+pT^{-\s}(\log T)^2+p^{\s}T^{-\s/2}(\log T)^2)).
\end{aligned}
\end{equation}
Since $\s>1/2$, the main term of \eqref{M_1(s) formula} is bounded by
\begin{align*}
-\frac{1}{\log T}\sum_{p}\frac{\log p}{p^{1/2}}\left\{\widehat\phi\left(\frac{\log p}{\log T}\right)+\widehat\phi\left(-\frac{\log p}{\log T}\right)\right\}p^{-\s}
\ll \frac{1}{\log T}\sum_{p}\frac{\log p}{p^{1/2+\s}} \ll_\s \frac{1}{\log T}.
\end{align*}

By the prime number theorem, applying partial summation, we have for any fixed $a>-1$,
\begin{align}\label{PNT}
\sum_{p\le x}p^{a}\log p = \frac{1}{a+1}x^{a+1}+\Ocal\left(\frac{x^{a+1}}{\log x}\right)=\Ocal_a(x^{a+1}), \quad x \rightarrow \infty.
\end{align}
Applying \eqref{PNT} to
the error terms of \eqref{M_1(s) formula}, we immediately get that
each of the bounding terms is further bounded as follows:
$$ \frac{T^{1-2\s}}{\log T}\sum_{p\le T^\a}p^{\s-3/2}\log p\ll \frac{T^{1-2\s+(\s-1/2)\a}}{\log T},
$$
\begin{align}\label{second error M1}
T^{-\s}\log T\sum_{p\le T^\a}p^{1/2}\log p\ll_\e T^{-\s+3\a/2+\e},
\end{align}
\begin{align}\label{third error M1}
T^{-\s/2}\log T\sum_{p\le T^\a}p^{\s-1/2}\log p\ll_\e T^{-\s/2+\a(\s+1/2)+\e}
\end{align}
for any $\e >0$.
Hence, each of the bounds above is bounded by $(\log{T})^{-1}$, up to constant multiples, if
\[
\a < \min\left\{2, \frac{2(\s-\e)}{3}, \frac{\s-2\e}{2\s+1}\right\}.
\]
Note that for any sufficiently small $\e>0$, we have
$$
\frac{2(\s-\e)}{3}-\frac{\s-2\e}{2\s+1}
=
\frac{\s(4\s-1)+4\e(1-\s)}{3(2\s+1)}>0,
$$
and also, since $1/2<\s\le1$,
\[
\frac{\s-2\e}{2\s+1} < \frac{1}{2}.
\]
Hence, if
$$ \a < \frac{\s-2\e}{2\s+1} $$
for any sufficiently small $\e>0$,
then
$$ M_1(\s,T) \ll_{\s,\e} \frac{1}{\log{T}}. $$

Next we estimate $M_2(\s,T)$.
By the support condition on $\widehat{\phi}$,
only prime numbers $p$ satisfying $p\le T^{\a/2}$ ($\a<1$) appear in the summation.
Applying Theorem \ref{Im(s)/I(s)}, we have
\begin{equation}\label{M_2(s) formula}
\begin{aligned}
M_2(\s,T)
&= -\frac{1}{\log T}\sum_{p}\frac{\log p}{p}\left\{\widehat{\phi}\left(\frac{2\log p}{\log T}\right)
+\widehat{\phi}\left(-\frac{2\log p}{\log T}\right)\right\}
\\
&\qquad\times (p^{-2\s} 
+\Ocal_\s(p^{2\s-2}T^{1-2\s}+p^2T^{-\s}(\log T)^2+p^{2\s}T^{-\s/2}(\log T)^2)).
\end{aligned}
\end{equation}
As in the case of $M_1(\s,T)$, since $\s>1/2$, the main term of \eqref{M_2(s) formula} is $\Ocal_\s((\log{T})^{-1})$.
Again, applying \eqref{PNT}, we get that each of the error terms of \eqref{M_2(s) formula}
is bounded, up to constant multiples, by
$$ \frac{T^{1-2\s}}{\log T}\sum_{p\le T^{\a/2}}p^{2\s-3}\log p\ll \frac{T^{1-2\s+(\s-1)\a}}{\log T}<\frac{1}{\log T},
$$
\begin{align}\label{second error M2}
T^{-\s}\log T\sum_{p\le T^{\a/2}}p\log p\ll_\e T^{-\s+\a+\e},
\end{align}
\begin{align}\label{third error M2}
T^{-\s/2}\log T\sum_{p\le T^{\a/2}}p^{2\s-1}\log p\ll_\e T^{-\s/2+\s\a+\e}
\end{align}
for any $\e >0$, where we have used the condition $1/2<\s\le1$ in the first inequality above.
The other two bounds are also bounded by $(\log{T})^{-1}$, up to constant multiples,
as long as $\a<\min\{\s-\e,1/2-\e/\s\}=1/2-\e/\s$, because $1/2<\s\le1$.
Therefore,
\[
M_2(\s,T)\ll_{\s,\e} \frac{1}{\log T}
\]
holds if $\a<1/2-\e/\s$ for any sufficiently small $\e>0$.

Finally, note that for any sufficiently small $\e>0$, we have
$$\frac{1}{2}-\frac{\e}{\s}-\frac{\s-2\e}{2\s+1}
=\frac{\s-2\e}{2\s(2\s+1)}
>0.
$$
Thus \eqref{M1M2est} holds if $\supp\widehat{\phi}\subset [-\a,\a]$ with $\a<\a_\s$.
This completes the proof.
\end{proof}

Next we consider the case $\s=1/2$ of Theorem \ref{main result}.
It follows immediately from the following proposition.

\begin{prop}
For any Schwartz function $\phi$ on $\RR$ such that $\supp \widehat{\phi}\subset (-1/4, 1/4)$, we have as $T\rightarrow \infty$,
\begin{align*}
\frac{1}{I(1/2,T)}\int_{T}^{2T}
|\zeta(1/2+it)|^{2}D_{T}(\zeta_t, \phi)dt
&= \widehat{\phi}(0)-\phi(0)+\int_{-\infty}^{\infty} \widehat{\phi}(x)|x|dx
+\Ocal\left(\frac{1}{\log T}\right)\\
&= \int_{-\infty}^{\infty}\phi(x)\left(1-\frac{\sin^2 (\pi x)}{(\pi x)^2}\right)dx+\Ocal\left(\frac{1}{\log T}\right).
\end{align*}
\end{prop}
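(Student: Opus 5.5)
We start again from the decomposition \eqref{eq:1leveldensity} with $\s=1/2$. Lemma \ref{formula of M0} already gives $M_0(1/2,T)=\widehat{\phi}(0)+\Ocal(1/\log T)$, so it remains to evaluate $M_1(1/2,T)$ and $M_2(1/2,T)$. Let $\supp\widehat\phi\subset[-\a,\a]$ with $\a<1/4$. Then only primes $p\le T^{\a}$ occur in $M_1$ and only $p\le T^{\a/2}$ occur in $M_2$. All of these satisfy $p,p^2<T/10$ for large $T$, so Theorem \ref{Im(1/2)/I(1/2)} applies to $I_p$ and $I_{p^2}$. Since $I_m(1/2,T)/I(1/2,T)$ has a real main term, the conjugate and the unconjugated ratio have the same expansion, and $M_1$ becomes
\[
-\frac{1}{\log T}\sum_p\frac{\log p}{p^{1/2}}\Bigl\{\widehat\phi\Bigl(\frac{\log p}{\log T}\Bigr)+\widehat\phi\Bigl(-\frac{\log p}{\log T}\Bigr)\Bigr\}\Bigl(p^{-1/2}-p^{-1/2}\frac{\log p}{\log T}+\text{errors}\Bigr).
\]
$M_2$ has the same form with $p^2$ in place of $p$.

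\textbf{Error terms.} Using \eqref{PNT}, the term $\Ocal(m^{-1/2}\log m/(\log T)^2)$ contributes $\ll(\log T)^{-3}\sum_{p\le T^\a}(\log p)^2/p\ll(\log T)^{-1}$. The term $\Ocal(mT^{-1/2}\log T)$ contributes $\ll T^{-1/2+3\a/2}$. The term $\Ocal(m^{1/2}T^{-1/4}\log T)$ contributes $\ll T^{-1/4+\a+\e}$. Both of the last two are $\ll(\log T)^{-1}$ precisely when $\a<1/4$, and this is where the support restriction comes from. In $M_2$ the main terms carry $p^{-1}\cdot p^{-1}$, so they are $\Ocal(1/\log T)$, and its error terms are smaller than those of $M_1$.

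\textbf{Main term of $M_1$.} This is the step with real content. It equals
\[
-\frac{1}{\log T}\sum_p\frac{\log p}{p}\Bigl(1-\frac{\log p}{\log T}\Bigr)\Bigl\{\widehat\phi\Bigl(\frac{\log p}{\log T}\Bigr)+\widehat\phi\Bigl(-\frac{\log p}{\log T}\Bigr)\Bigr\}.
\]
By the prime number theorem in the form $\sum_{p\le x}\log p/p=\log x+\Ocal(1)$ and partial summation, a sum $\frac{1}{\log T}\sum_p\frac{\log p}{p}g\bigl(\frac{\log p}{\log T}\bigr)$ with $g$ smooth and compactly supported equals $\int_0^\infty g(u)\,du+\Ocal(1/\log T)$. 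The main term of $M_1$ therefore becomes
\[
-\int_0^\infty(1-u)\bigl(\widehat\phi(u)+\widehat\phi(-u)\bigr)du=-\int_{-\infty}^\infty\widehat\phi(x)\,dx+\int_{-\infty}^\infty|x|\,\widehat\phi(x)\,dx=-\phi(0)+\int_{-\infty}^\infty\widehat\phi(x)|x|\,dx,
\]
up to $\Ocal(1/\log T)$. The only care needed is the remainder in partial summation: since $g$ has bounded variation, the $\Ocal(1)$ remainder in the prime sum yields $\Ocal(1/\log T)$.

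\textbf{Conversion to the density form.} Putting the pieces together gives the first displayed equality. For the second, use Plancherel. We have $\int\phi=\widehat\phi(0)$ and $\phi(0)=\int\widehat\phi$. The Fourier transform of $(\sin\pi x/\pi x)^2$ is the triangle $\max(1-|\xi|,0)$. Since $\supp\widehat\phi\subset[-1,1]$,
\[
\int\phi(x)\frac{\sin^2\pi x}{(\pi x)^2}\,dx=\int\widehat\phi(\xi)(1-|\xi|)\,d\xi=\phi(0)-\int\widehat\phi(\xi)|\xi|\,d\xi,
\]
which gives exactly the claimed expression with $W_{{\rm U},1/2}$.
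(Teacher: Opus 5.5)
Your proposal is correct and follows essentially the same route as the paper: $M_0$ via Lemma~\ref{formula of M0}, then $M_1$ and $M_2$ via Theorem~\ref{Im(1/2)/I(1/2)}, with the same error bounds $T^{-1/2+3\a/2}$ and $T^{-1/4+\a}$ forcing $\a<1/4$, and partial summation for the main term of $M_1$. The only cosmetic differences are these: you apply partial summation directly to the symmetrized function $g(u)=(1-u)\bigl(\widehat\phi(u)+\widehat\phi(-u)\bigr)$ instead of first reducing to even $\phi$, and you verify the second equality yourself via Plancherel and the Fourier transform of the Fej\'er kernel rather than citing \cite[Proposition 3.3]{SugiyamaChacha}.
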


\begin{proof}
As in the proof of Proposition \ref{formula of weighted density by s}, 
we recall the equality \eqref{eq:1leveldensity} and note that the evaluation of the term $M_0(1/2,T)$ is done in Lemma \ref{formula of M0}.
We deal with $M_1(1/2,T)$ and $M_2(1/2,T)$.
Suppose $\supp \widehat{\phi} \subset [-\b,\b]$ for some $\b\in(0,1)$.

First we consider $M_1(1/2,T)$. Each $p$ in the summation satisfies $p<T/10$ for any sufficiently large $T$ since $\b<1$.
Thus Theorem \ref{Im(1/2)/I(1/2)} yields
\begin{equation}\label{M_1(1/2) formula}
\begin{aligned}
&M_1(1/2,T)
\\ &\qquad= -\frac{1}{\log T}\sum_{p}\frac{\log p}{p^{1/2}}
\left\{\widehat{\phi}\left(\frac{\log p}{\log T}\right) 
+\widehat{\phi}\left(-\frac{\log p}{\log T}\right)\right\}
\\
&\qquad\qquad\times \left\{p^{-1/2}
-\frac{p^{-1/2}\log p}{\log T} \right.
\\
&\qquad\qquad\qquad+ \left.
\Ocal\left(\frac{p^{-1/2}\log p}{(\log T)^2}\right)
+ \Ocal(pT^{-1/2}\log{T}) + \Ocal(p^{1/2}T^{-1/4}\log T)\right\}.
\end{aligned}
\end{equation}
Since this vanishes if $\phi$ is odd, we may assume that
$\phi$ is an even function.
Using partial summation and the prime number theorem,
we can show that as $T\rightarrow \infty$,
\begin{align}\label{partial sum log1}
\sum_{p}\widehat\phi\left(\frac{\log p}{\log T}\right)\frac{\log p}{p\log T}
=\frac{1}{2}\int_{-\infty}^{\infty}\widehat{\phi}(x)dx+\Ocal\left(\frac{1}{\log T}\right)
\end{align}
and
\begin{align}\label{partial sum log2}
\sum_{p}\widehat{\phi}\left(\frac{\log p}{\log T}\right)\frac{(\log p)^2}{p(\log T)^2}
=\frac{1}{2}\int_{-\infty}^{\infty}\widehat{\phi}(x)|x|dx+\Ocal\left(\frac{1}{\log T}\right)
\end{align}
hold for any even Schwartz function $\phi$
(see \cite[Proof of Proposition 3.2]{Sugiyama} or \cite[Proof of Proposition 3.2]{SugiyamaChacha}\footnote{
In \cite[Proposition 3.2]{SugiyamaChacha}, $\phi$ is implicitly assumed to be even. See Remark \ref{correction of Dirichlet L} for details.}).
With the aid of these formulas, the two main terms of \eqref{M_1(1/2) formula}
sum up to
$$-\int_{-\infty}^{\infty}\widehat{\phi}(x)dx
+\int_{-\infty}^{\infty}\widehat{\phi}(x)|x|dx+\Ocal\left(\frac{1}{\log T}\right)=
-\phi(0)+\int_{-\infty}^{\infty}\widehat{\phi}(x)|x|dx+\Ocal\left(\frac{1}{\log T}\right)$$
as $T\rightarrow \infty$, since $\phi$ is an even function.

The contribution of the first error term of \eqref{M_1(1/2) formula}
is $\Ocal((\log{T})^{-1})$ by \eqref{partial sum log2}.
The contributions of the second and the third error terms
of \eqref{M_1(1/2) formula} are evaluated similarly to \eqref{second error M1} and \eqref{third error M1} by using \eqref{PNT}, and
they are estimated as $\Ocal(T^{-1/2+3\b/2})$ and $\Ocal(T^{-1/4+\b})$, respectively.
Hence all the three error terms of \eqref{M_1(1/2) formula} are absorbed into $\Ocal((\log{T})^{-1})$ as long as $\b<1/4$.

Next we evaluate $M_2(1/2,T)$. As in previous arguments, we begin with the estimate
\begin{equation}\label{M_2(1/2) formula}
\begin{aligned}
&M_2(1/2,T)\\
&\qquad= -\frac{1}{\log T}\sum_{p}\frac{\log p}{p}\left\{\widehat{\phi}\left(\frac{2\log p}{\log T}\right) +
\widehat{\phi}\left(-\frac{2\log p}{\log T}\right)\right\} \\
&\qquad\qquad\times \left\{p^{-1}
-\frac{2p^{-1}\log p}{\log T} \right.
\\ &\qquad\qquad\qquad
+
\left.
\Ocal\left(\frac{p^{-1}\log p}{(\log T)^2}\right)
+\Ocal(p^2T^{-1/2}\log T)+\Ocal(pT^{-1/4}\log T)\right\},
\end{aligned}
\end{equation}
which follows from Theorem \ref{Im(1/2)/I(1/2)}.
The contributions of the two main terms and the first error term
of \eqref{M_2(1/2) formula} are simultaneously bounded, up to constant multiples, by
$$
\frac{1}{\log T}\sum_{p}\frac{(\log p)^2}{p^2}\ll \frac{1}{\log T}.
$$
The contributions of the second and the third error terms
of \eqref{M_2(1/2) formula}
are estimated in the same way as in \eqref{second error M2}
and \eqref{third error M2} using \eqref{PNT},
and they are estimated as
$\Ocal(T^{-1/2+\b})$ and $\Ocal(T^{-1/4+\b/2})$, respectively.
Therefore, as long as $\b<1/2$, these two terms
are absorbed into $\Ocal((\log{T})^{-1})$.

All in all, we obtain the first equality of the assertion
under the condition $\supp\widehat{\phi} \subset (-1/4, 1/4)$. The second equality of the assertion follows from \cite[Proposition 3.3]{SugiyamaChacha}.
\end{proof}

\begin{rem}\label{correction of Dirichlet L}
In \cite[\S3]{SugiyamaChacha}, the explicit formula of the Dirichlet $L$-function $L(s,\chi)$ is valid only when the test function $\phi$ is even, and $\phi$ is implicitly assumed to be an even function.
Moreover,
the asymptotic formulas \eqref{partial sum log1} and \eqref{partial sum log2} used in \cite[Proposition 3.2]{SugiyamaChacha}
hold only when $\phi$ is even.
We may assume that $\phi$ is an even function in \cite[\S3]{SugiyamaChacha}
for the following reason.
Let $q$ be a prime number and let $\Fcal_q$ be the set of all non-principal Dirichlet characters modulo $q$.
If $\phi$ is an odd Schwartz function on $\RR$ such that $\widehat{\phi}$ is compactly supported, then the functional equation of $L(s,\chi)$ for any $\chi \in \Fcal_q$ implies
$D(\chi^{-1},\phi)=-D(\chi,\phi)$. Therefore for any $s \in [1/2, 1)$, we have
$$\sum_{\chi\in\Fcal_q}|L(s,\chi)|^2D(\chi,\phi)=\sum_{\chi\in\Fcal_q}|L(s,\chi^{-1})|^2D(\chi^{-1},\phi)=-\sum_{\chi \in \Fcal_q}|L(s,\chi)|^2D(\chi,\phi),$$
thus we obtain
$$\sum_{\chi\in \Fcal_{q}}|L(s,\chi)|^2D(\chi,\phi)=0=\int_{-\infty}^{\infty}\phi(x)W_{{\rm U},s}(x)dx.$$
\end{rem}

\section*{Acknowledgment}

The authors would like to thank Sandro Bettin for informing them of the paper \cite{ConreySnaith}.
The first author was supported by
JSPS KAKENHI Grant Number JP24K06664.
The second author was supported by the Inamori Research Grant 2024 and
JSPS KAKENHI Grant Number 22K13895.



\begin{thebibliography}{99} 

\bibitem{BettinFazzari}
S. Bettin, A. Fazzari, {\it A weighted one-level density of the non-trivial zeros of the Riemann zeta-function}, Math. Z. {\bf 307} (2024), no. 2, Paper No. 32, 23 pp.

\bibitem{BettinGonek} S. Bettin, S.M. Gonek, {\it The $\theta=\infty$ conjecture implies the Riemann hypothesis}, Mathematika {\bf 63} (2017), no. 1, 29--33.

\bibitem{Conrey} B. Conrey, {\it Families of L-functions and 1-level densities},
In: Recent Perspectives on Random Matrix Theory and Number Theory,
London Math. Soc. Lecture Note Ser. {\bf 332}, Cambridge University Press, Cambridge (2005), 225--249.

\bibitem{ConreySnaith} B. Conrey, N. Snaith, {\it Applications of $L$-functions ratios conjecture}, Proc. Lond. Math. Soc. (3) {\bf 94} (2007), no. 3, 594--646.

\bibitem{Fazzari} A. Fazzari, {\it A weighted one-level density of families of $L$-functions}, Algebra Number Theory {\bf 18} (2024), no.1, 87--132.

\bibitem{HL16} G. H. Hardy, J. E. Littlewood, {\it Contributions to the theory of the Riemann zeta-function and the theory of the distribution of primes}, Acta Math. {\bf 41} (1916), no. 1, 119--196.

\bibitem{HL21} G. H. Hardy, J. E. Littlewood, {\it The zeros of Riemann's zeta-function on the critical line}, Math. Z. {\bf 10} (1921), no. 3-4, 283--317.

\bibitem{HL23} G. H. Hardy, J. E. Littlewood, {\it The Approximate Functional Equation in the Theory of the Zeta-Function, with Applications to the Divisor-Problems of Dirichlet and Piltz}, Proc. Lond. Math. Soc. (2) {\bf 21} (1923), 39--74.

\bibitem{HL29} G. H. Hardy, J. E. Littlewood, {\it The Approximate Functional Equations for $\zeta(s)$ and $\zeta^2(s)$}, Proc. Lond. Math. Soc. (2) {\bf 29} (1929), no. 2, 81--97.

\bibitem{HughesRudnick} C.P. Hughes, Z. Rudnick, {\it Linear statistics for zeros of the Riemann's zeta function}, C. R. Acad. Sci. Paris, Ser. {\bf I 335} (2002), 667--670.

\bibitem{IngMV} A. E. Ingham, {\it Mean-Value Theorems in the Theory of the Riemann Zeta-Function}, Proc. Lond. Math. Soc. (2) {\bf 27} (1927), no. 4, 273--300.

\bibitem{IvicBook} A. Ivić, {\it The Riemann zeta-function}, Theory and applications. Reprint of the 1985 original, Dover Publications, Inc., Mineola, NY, 2003.

\bibitem{KatzSarnak1} N. M. Katz, P. Sarnak, {\it Random matrices, Frobenius eigenvalues, and monodromy}, American Mathematical Society Colloquium Publications {\bf 45}, American Mathematical Society, Providence (1999).

\bibitem{KatzSarnak2} N. M. Katz, P. Sarnak, {\it Zeroes of zeta functions and symmetry}, Bull. Amer. Math. Soc. (N.S.) {\bf 36} (1999), no. 1, 1--26.

\bibitem{KnightlyReno} A. Knightly, C. Reno, {\it Weighted distribution of low-lying zeros of $\GL(2)$ $L$-functions}, Canad. J. Math. {\bf 71} (1) (2019), 153--182.

\bibitem{KowalskiSahaTsimerman} E. Kowalski, A. Saha, J. Tsimerman, {\it Local spectral equidistribution for Siegel modular forms and applications}, Compos. Math. {\bf 148} (2012), 335--384.

\bibitem{lan09} E. Landau, {\it Handbuch der {L}ehre von der {V}erteilung der {P}rimzahlen}, 2 {B}\"ande, 2d ed with an appendix by Paul T. Bateman,
Chelsea Publishing Co., New York, 1953.

\bibitem{sch09} W. Schnee, {\it \"Uber Mittelwertsformeln in der Theorie der Dirichlet'schen Reihen}, habilitation thesis, Kgl. Univ. Breslau, 1909.

\bibitem{Selberg} A. Selberg, {\it Contributions to the theory of the Riemann zeta-function}, Arch. Math. Naturvid. {\bf 48} (1946), no. 5, 89--155.

\bibitem{Sugiyama} S. Sugiyama, {\it Low-lying zeros of symmetric power $L$-functions weighted by symmetric square $L$-values}, J. Math. Soc. Japan {\bf 77} (2025), no. 4, 1205--1231.

\bibitem{SugiyamaChacha} S. Sugiyama, A. I. Suriajaya, {\it Weighted one-level density of low-lying zeros of Dirichlet L-functions}, Res. Number Theory {\bf 8} (2022), no. 3, Paper No. 55, 11 pp.


\end{thebibliography}
\end{document}